\newtheorem{theorem}{Theorem}[section]
\newtheorem{lemma}[theorem]{Lemma}
\newtheorem{proposition}[theorem]{Proposition}
\newtheorem{definition}[theorem]{Definition}
\newtheorem{corollary}[theorem]{Corollary}
{\theorembodyfont{\rmfamily}

\newtheorem{remark}[theorem]{Remark}
}
\newcommand{\transposee}[1]{{\vphantom{#1}}^{t}{#1}}
\newcommand{\N}{\mathbb{N}}
\newcommand{\Z}{\mathbb{Z}}
\newcommand{\R}{\mathbb{R}}
\newcommand{\T}{\mathbb{T}}
\newcommand{\n}{\boldsymbol{n}}
\renewcommand{\j}{\textsf{j}}
\newcommand{\B}{\boldsymbol{B}}
\renewcommand{\div}{{{\rm div}}}
\newcommand{\Ker}{{{\rm Ker}}}
\newcommand{\divt}{{{\rm div}_{\tau}}}
\newcommand{\nablat}{{{\nabla}_{\tau}}}
\newcommand{\Deltat}{{{\Delta}_{\tau}}}
\newcommand{\sH}{{\rm H}}
\newcommand{\sW}{{\rm W}}
\newcommand{\sL}{{\rm L}}
\newcommand{\sC}{{\rm C}}
\newcommand{\C}{{C}}
\newcommand{\J}{\mathcal{J}}
\newcommand{\A}{\mathcal{A}}
\newcommand{\V}{\mathcal{V}}
\renewcommand{\H}{\mathcal{H}}
\newcommand{\He}{\mathsf{H}}
\newcommand{\Om}{\Omega}
\newcommand{\Id}{I\!d}
\renewcommand{\Bar}{{\rm Bar}}
\newcommand{\Vol}{{\rm Vol}}
\newcommand{\bessel}{j_{d/2-1}}
\newcommand{\fbessel}{J_{d/2-1}}
\newcommand{\Bk}{\color{black}}
\def\qed{\hfill $\square$ \goodbreak}
\def\eps{\varepsilon}
\title{Stability in shape optimization with second variation}
\author{M. Dambrine - J. Lamboley}
\date{\today}
\begin{document}
\baselineskip14pt \maketitle

\begin{abstract}
We are interested in the question of stability in the field of shape optimization, with focus on the strategy using second order shape derivative. More precisely, we identify structural hypotheses on the hessian of the considered shape function, so that critical stable domains (i.e. such that the first order derivative vanishes and the second order one is positive) are local minima for smooth perturbations; as we are in an infinite dimensional framework, and that in most applications there is a norm-discrepancy phenomenon, this type of result require a lot of work. We show that these hypotheses are satisfied by classical functionals, involving the perimeter, the Dirichlet energy or the first Laplace-Dirichlet eigenvalue. We also explain how we can easily deal with constraints and/or invariance of the functionals. As an application, we retrieve or improve previous results from the existing literature, and provide new local stability results. We finally test the sharpness of our results by showing that the local minimality is in general not valid for non-smooth perturbations.
\end{abstract}

\noindent {\small {\it 2000MSC\ }: 49K20,  49Q10.}

\noindent {\small {\it Keywords} : isoperimetric inequalities, shape optimization, second order sensitivity, stability in shape optimization.}

\tableofcontents

\section{Introduction}

\subsection{Motivation and literature}

In this paper, we are interested in the question of stability in the field of shape optimization. More precisely, given $J:\A\to \R$ defined on $\A\subset\{\Om\textrm{ smooth enough open sets}\textrm{ in }\R^d\}$, we consider the optimization problem
\begin{equation}\label{eq:pb}
\min\left\{ J(\Om),\;\;\Om\in\A\right\},
\end{equation}
and  we ask the following question:

{\it if $\Om^*\in\A$ is a critical domain satisfying a stability condition (that is to say a strict second order optimality condition), can we conclude that $\Om^*$ is a strict local minimum for \eqref{eq:pb} in the sense that
\begin{equation}\label{eq:stabJ}
J(\Om)-J(\Om^*)\geq c d_{1}(\Om,\Om^*)^2, \;\;\;\;\textrm{ for every }\Om\in \V(\Om^*)
\end{equation}
where $c\in(0,\infty)$, $d_{1}$ is a distance among sets, and $\V(\Om^*)=\{\Om\in \A, d_{2}(\Om,\Om^*)<\eta\}$ is a neighborhood of $\Om^*$, relying on a (possibly different) distance $d_{2}$?}\\
(Note that the word distance is used here and in the rest of the paper as an intuitive notion, asserting that $\Om$ is far or close from the fixed shape $\Om^*$, and does not refer in general to the formal mathematical notion of distance).\\

\noindent{\bf Origin of the question:}

For example in \cite{Vogel}, the following terminology is used: given a function $f:X\to\R$ where $X$ is a Banach space, the property that a critical point $x\in X$ has a positive second order derivative is called {\it linear}-stability, and implies that $t\mapsto f(x+ty)$ has a local minimum at $t=0$ for every $y\in X$, while {\it nonlinear}-stability requires that $f(x)$ is less that $f(z)$ for any $z$ close to $x$. It is classical that, when dealing with infinitely dimensional parameters, these two notions do not coincide in general.

In the framework of shapes, this question has been raised in different settings, and its answer has sometimes been mistakenly considered as easily valid: for example, in the context of stable constant mean curvature surfaces, literature has focused for a while on giving sufficient conditions so that {\it linear}-stability would occur, without proving that it actually implied local minimality. This point was raised by Finn in \cite{Finn}, and some answers followed quickly, see \cite{Grosse-Brauckmann,Vogel,Vogel2}, though in the particular case of the ball and the isoperimetric problem, the  difficulty was already handled by Fuglede in \cite{Fuglede}. In the context of shape functionals involving PDE, the issue was raised by Descloux in \cite{Descloux} and a first solution was given in \cite{DambrinePierre,Dambrine}. All these examples will fit in the framework we describe in this paper.\\

\noindent{\bf Quantitative isoperimetric inequalities: different strategies}

During the last decade, starting with \cite{FMP}, this type of question gained interest in the community of isoperimetric inequalities and shape optimization, in particular three main methods were developed in a quite extensive literature, in order to get a stability result of the form \eqref{eq:stabJ} for the most classical problems \eqref{eq:pb}:

\begin{itemize}
\item Symmetrization technique,
\item Mass transportation approach,
\item Second order shape derivative approach.
\end{itemize}

As an example, we quote the $\sL^1$-stability result for the perimeter: there exists $c\in(0,\infty)$ such that
for every $V_{0}\in(0,\infty)$, 
\begin{equation}\label{eq:stabP}
\frac{P(\Om)-P(B)}{P(B)}\geq c d_{F}(\Om,B)^2, \;\;\;\;\textrm{ for every measurable set }\Om\textrm{ such that }|\Om|=V_{0},
\end{equation}
where $P$ denotes the perimeter (in the sense of geometric measure theory), $|\cdot|$ is the volume, $B$ is any ball of volume $V_{0}$, and 
$$d_{F}(\Om,B)=\inf_{\tau\in\R^d}\frac{|(\Om-\tau)\Delta B|}{|B|}$$
is known as the Fraenkel asymmetry (which can be seen as the $\sL^1$-`distance' to the ball, up to translations). For this specific example, all of these three strategies have been successfully applied, see \cite{FMP,FigMagPra,CiLe}.

Note in particular that the result is global: in that case a local result (in an $\sL^1$-neighborhood) implies a non-local one as it is shown in \cite[Lemma 5.1 and Lemma 2.3]{FMP}.

\subsection{Second order shape derivatives approach}

In this paper, {\bf we focus on the third strategy}, which recently received even more attention as in some examples, the other techniques could not be applied, or provided non-optimal results: as an example we quote the $\sL^1$-stability for the Faber-Krahn inequality, which was solved with symmetrization technique in \cite{FMP2}, but provided a higher (and less strong) exponent in \eqref{eq:stabJ}, and has been improved to an optimal exponent recently in \cite{BdPV} using this third strategy (see also \cite{FuscoZhang}).

One specific difficulty for this strategy is to define a framework of differential calculus within shapes. This can be done for example with the notion of shape derivatives, but one main drawback is that this is available only for reasonably smooth deformations of the initial shape, or in other words, for a rather strong distance $d_{2}$ (otherwise it is clear that classical functionals are not differentiable for non-smooth perturbations). Nevertheless, as it is shown for example in \cite{AFM}, the strategy can still lead to results for very weak distances (as the Fraenkel asymmetry), and can be decomposed in two main steps:\label{page:strategy}

\begin{itemize}
\item first, with the help of the differential setting and the fact that $\Om^*$ satisfies a strict second order optimality condition, prove a stability result for small and smooth perturbations of $\Om^*$; in other words, prove that \eqref{eq:stabJ} is valid where $d_{2}$ is a strong distance (and $d_{1}$ is limited by the properties of $J$, and is in general different from $d_{2}$, see below),
\item second, deduce from this first step that \eqref{eq:stabJ} is valid where $d_{1}=d_{2}$ is a weak distance (for example the Fraenkel asymmetry).
\end{itemize}
For the perimeter functional, the first step goes back to \cite{Fuglede}, and the second step is inspired by results in \cite{White,MorganRos}, though the complete result was achieved in \cite{CiLe}.
These two steps rely on very different arguments: in particular, the second step usually requires to adapt the regularity theory related to the optimization problem \eqref{eq:pb}, namely the notion of quasi-minimizer of the perimeter when the functional $J$ contains a perimeter term, or the regularity of free boundaries when $J$ involves an energy related to a PDE functional (see \cite{AFM,BdPV} respectively), so it strongly relies on specific properties of the functional $J$ under study. \\

{\it The aim of this paper is to describe a general framework so that the first step of the above strategy applies:
while this has been done in a few places in the literature, every time specifically for the functional that was under study, we aim at giving some general statements, and then show that these statements both apply to the examples already handled in the literature, and also to new examples. Despite getting a wider degree of generality, we also simplify many proofs and strategies found in the previous literature, as we describe in the rest of this introduction. We also show that the second step of the above strategy does not work with a similar degree of generality.\\
}

\noindent{\bf Neighborhood of shapes}

Before stating the main results, we briefly recall two classical ways to parametrize shapes in a neighborhood of a fixed one:
\begin{itemize}
\item {\bf Diffeomorphisms and shape derivatives:} we consider a shape to be a neighbor of $\Om$ if it is a deformation of $\Om$ by a diffeomorphism which is close to the identity.
More precisely,  $\Theta$ being  a Banach space such that $\sC^\infty(\R^d,\R^d)\subset\Theta\subset W^{1,\infty}(\mathbb{R}^d,\mathbb{R}^d)$, we consider shapes of the form $(\Id+\theta)(\Om)$ where $\|\theta\|_{\Theta}$ is small.


One defines the function $\mathcal{J}_{\Om}$ on a neighborhood of 0 in $\Theta$ by
$$\forall \theta\in\Theta, \;\;\;\mathcal{J}_{\Om}(\theta)=J[(\Id+\theta)(\Omega)].$$
One then uses {(in the whole paper)} the usual notion of {Fr\'echet-}differentiability: shape derivatives of $J$ at $\Om$ are the successive derivatives of $\mathcal{J}_{\Om}$ at $0$, when they exist. In particular, the first shape derivative is $J'(\Om):=\mathcal{J}_{\Om}'(0)$, a continuous linear form on $\Theta$ (the shape gradient), and the second order shape derivative is $J''(\Om):=\mathcal{J}_{\Om}''(0)$, a continuous symmetric bilinear form on $\Theta$ (the shape hessian).

\item{\bf Normal graphs:}\\
On the other hand, assuming that $\Om$ is $\sC^1$ (and $\n=\n_{\partial\Om}$ is its outer unit normal vector) we can consider ``normal graph'' on $\partial\Om$, that is $\Om_{h}$ 
such that
\begin{equation}
\label{formule:deformation:normale}
\partial\Om_{h}=\{x+h(x)\n(x), x\in \partial\Om\},
\end{equation}
where  $h\in X$, and $X$ is  a Banach space of scalar functions on $\partial\Om$.

\end{itemize}

Let us emphasize that even if the second method seems more restrictive, the two methods are equivalent in a neighborhood of $\Om$ (if $\Om$ is smooth enough) in the sense that one describes as many shapes with each methods (for suitable $\Theta$ and $X$): first, a normal graph $\Om_{h}$ is a deformation of $\Om$ for any $\boldsymbol{\xi}_{h}$ which is an extension to $\R^d$ of $h\n$ (and then $\j_{\Om}(h)=\mathcal{J}_{\Om}(\xi_{h})$), see the introduction of Section \ref{section:Chs}. Second,
if we consider diffeomorphims that are close to the identity, the boundaries of the perturbed domains are graphs over the boundary of the initial domain
, see for example Lemma 3.1 in \cite{NovruziPierre}.

However, it is not clear a priori that computing derivatives for normal graphs (derivatives of $\j_{\Om}:h\mapsto J(\Om_{h})$) is enough to describe shape derivatives (derivatives of $\mathcal{J}_{\Om}:\theta\mapsto J((\Id+\theta)(\Om))$): this issue is handled in our first result, see below. 


\subsection{Main results}

This paper contains three main results that we describe here:\\[-4mm]

\noindent{\bf 1. Structure of shape derivatives:} The tool of shape derivative using diffeomorphism as described just above is very convenient as most classical shape functionals are easily proven to be smooth in this setting (usually not using any regularity on the initial shape $\Om^*$, see more details in \cite[Chapter 5]{HenrotPierre} for example), and as noticed before, it is clear that computing derivatives in the sense of normal graphs is just a particular case (while the opposite seems not clear).
Nevertheless, one main drawback for our purpose is that as we are dealing with shapes, there is a lot of invariance for $\mathcal{J}_{\Om}$: any non-trivial diffeomorphisms that leaves $\Om$ invariant 
must lead to vanishing derivatives. It is therefore unreasonable to expect that the stability condition for optimal shapes writes $J''(\Om^*).(\xi,\xi) >0$ for $\xi\in\Theta\setminus\{0\}$.
For first order shape derivatives, this difficulty is well-known since Hadamard, who observed (in particular examples) that the shape gradient is a distribution supported on the boundary of the domain, acting only on the normal component of the deformation, 
 see for example \cite{DZ01Sha,HenrotPierre}, or \cite{LP} in a non-smooth setting. A similar observation can be made about second order shape derivatives, though the situation is more involved:
 it has been proven in \cite{NovruziPierre} that second order shape derivatives have a general structure, involving a quadratic form $\ell_{2}[J](\Om)$ acting only on normal components (as in the first order) and another term involving normal and tangential components, but relying on the first order derivative, see Theorem \ref{th:structure}. 
This structure is often observed in the literature on specific examples and after lengthy computations, while it can be used a priori to simplify the computations: indeed, once we know the shape functional is smooth (which can be shown without computations), this result implies that the computation of shape derivatives for purely normal deformations is sufficient to describe the second order derivative for any deformation (see also Remark \ref{rk:chain}). In particular, using the framework of normal graphs, we get $\j_{\Om}''(0)(h,h)=\ell_{2}[J](\Om)(h,h)$.\\[2mm]
{\it The first contribution of this paper is to give a new proof of the structure of second order shape derivatives, see Section  \ref{ssect:shapederivative}. Though the strategy in \cite{NovruziPierre} is quite natural as it shows that any small deformation of a shape can be seen (in a smooth way) as a normal deformation defined on the boundary, up to a change of parametrization of the boundary, we believe this new proof is less technical, and also quite natural as it only relies on the invariance properties mentioned before, and is therefore closer to the usual proof for the structure of first order shape derivatives. 
}
\\[2mm]
When $\Om^*$ is a critical domain, 
the proper stability assumption will reduce to the positivity of $\ell_{2}[J](\Om^*)$. Nevertheless, note that we will need in the proof of the stability result (Theorem \ref{th:main} below) to deal with second order shape derivatives at non-critical shapes as well. See also \cite{ACV16} for recent use of these structure results and application to numerical methods.\\

\noindent{\bf 2. Stability results: } Our second result provides an answer to the main goal of the paper. It gives the suitable assumptions on the functional so that linear stability implies non-linear stability. Before giving the statement, we describe these assumptions:

{\bf Assumption {\bf(C}$\!\!~_{\sH^{s_{2}}}${\bf)}:}\label{page:Chs} for $s_{2}\in(0,1]$, we say that the bilinear form $\ell$ acting on $\sC^\infty(\partial\Om)$ satisfies condition {\bf(C}$\!\!~_{\sH^{s_{2}}}${\bf)} (and by extension we say that $J$ satisfies the condition at $\Om$ if $\ell_{2}[J](\Om)$ does) if:
\begin{center}
\begin{minipage}{0.8\textwidth}
\begin{description}
\item[\hspace{-4mm}(C$\!\!~_{\sH^{s_{2}}}$)] there exists $s_{1}\in[0, s_{2})$ and $c_{1}>0$ such that $\ell=\ell_{m}+\ell_{r}$ with
\begin{equation*}
\left\{\begin{array}{l}\ell_{m}\textrm{ is lower semi-continuous in }\sH^{s_{2}}(\partial\Om) \\[3mm] \ell_{m}(\varphi,\varphi) \geq c_{1}|\varphi|_{\sH^{s_{2}}(\partial\Om)}^2, \ \forall \varphi\in \sC^\infty(\partial\Om),\\[3mm]
\ell_{r}\textrm{ continuous in }\sH^{s_{1}}(\partial\Om).
\end{array}\right.
\end{equation*}
\end{description}
\end{minipage}
\end{center}
where $|\cdot|_{\sH^{s_{2}}(\partial\Om)}$ denote the $\sH^{s_{2}}(\partial\Om)$ semi-norm. In that case, $\ell$ is naturally extended (by a density argument) to the space $\sH^{s_{2}}(\partial\Om)$.

{\bf Assumption {\bf(IT}$\!\!~_{\sH^{s},X}${\bf)}:}
given $\Om$, $s\in[0,1]$ and $X\subset W^{1,\infty}(\partial\Om)$ a Banach space, and assuming that $\j_{\Om}$ is $\sC^2$ in a neighborhood of 0 in  $X$, we say that $J$ satisfies  condition {\bf(IT}$\!\!~_{\sH^{s},X}${\bf)} (for ``improved Taylor'' expansion) at $\Om$  if:
\begin{center}
\begin{minipage}{0.85\textwidth}
\begin{description}
\item[\hspace{-6mm}(IT$\!\!~_{\sH^{s},X}$)] there exist $\eta>0$ and a modulus of continuity $\omega$ such that for every domain $\Om_{h}$ with $\|h\|_{X}\leq \eta$,
$$\left|J(\Om_{h})-J(\Om)-\ell_{1}(h)-\frac12\ell_{2}(h,h)\right|\leq \omega(\|h\|_{X})\|h\|_{\sH^{s}(\partial\Om)}^2,$$
\end{description}
\end{minipage}
\end{center}
where $\ell_{1}=\ell_{1}[J](\Om)=\j_{\Om}'(0),\; \ell_{2}=\ell_{2}[J](\Om)=\j_{\Om^*}''(0)$.

%

\begin{theorem}\label{th:main}
Let $\Om^*$ be a domain of class $\sC^{ 3}$, and $J$ a shape functional, twice Fr\'echet differentiable on a neighborhood of $\Om^*$ in $\sW^{1,\infty}$. We denote $\ell_{1}=\ell_{1}[J](\Om^*)$ and $\ell_{2}=\ell_{2}[J](\Om^*)$ (given by Theorem \ref{th:structure}), and assume that $J$ satisfies {\bf(C}$\!\!~_{\sH^{s_{2}}}${\bf)} and {\bf(IT}$\!\!~_{\sH^{s_{2}},X}${\bf)}  at $\Om^*$ for some $s_{2}\in(0,1]$ and $X$ a Banach space such that $\sC^\infty(\partial\Om^*)\subset X\subset \sW^{1,\infty}(\partial\Omega^*)$. \\
Then if $\Om^*$ is a critical and strictly stable shape for $J$, that is to say\footnote{here $\ell_{2}$ is a quadratic form, so $\ell_{2}>0$ on $X\setminus\{0\}$ means $\ell_{2}(\varphi,\varphi)>0$ for any $\varphi\in X\setminus\{0\}$}:
\begin{equation}
\label{eq:P''} 
\ell_{1}=0, \;\;\;\;\textrm{ and }\ell_{2}>0 \text{ on }\sH^{s_{2}}(\partial\Omega^*)\setminus\{0\},
\end{equation}
then 
there exist $\eta>0$ and $c=c(\eta)>0$ such that
\begin{equation}\label{eq:minstrict}
\forall \;\Om=\Om^*_{h}\textrm{ with } \|h\|_{X}\leq \eta, \;\;\;\;J(\Om)\geq J(\Om^*)+ c \|h\|_{\sH^{s_{2}}(\partial\Om^*)}^2.
\end{equation}
\end{theorem}
{\bf About the proof of Theorem \ref{th:main}, and hypotheses {\bf(C}$\!\!~_{\sH^{s_{2}}}${\bf)}} and {\bf(IT}$\!\!~_{\sH^{s},X}${\bf)}:
\begin{itemize}
\item First, as we deal with infinite dimensional differential calculus, we have to show that under assumption {\bf(C}$\!\!~_{\sH^{s_{2}}}${\bf)},
\begin{equation}\label{eq:coercivity}
\ell_{2}>0 \textrm{ on } \sH^{s_{2}}(\partial\Om^*)\setminus \{0\}\Leftrightarrow\Big( \exists \gamma>0,\;\;\;\forall \varphi\in \sH^{s_{2}}(\partial\Om^*), \;\;\; \ell_{2}(\varphi,\varphi) \geq \gamma\|\varphi\|_{\sH^{s_{2}}(\partial\Om^*)}^2\Big).
\end{equation}
The proof of this fact was inspired by \cite{Grosse-Brauckmann,AFM} dealing with particular cases (see also \cite[Theorem 4.11]{CT15Sec} for a similar example in a different context). 
Note also that the value of $s_{2}$ is determined by the shape functional $J$ (in practice $s_{2}$ usually does not depend on $\Om$): the choice of the distance $d_{1}$ in \eqref{eq:stabJ} is therefore limited by this coercivity property 
(see also \cite{Fuglede} where an upper bound of the isoperimetric deficit is given, in a smooth neighborhood). As we will notice in examples, when $J$ contains a perimeter term, $s_{2}=1$, while for PDE functionals we are dealing with here (see below when we describe examples), $s_{2}=1/2$. For an interesting result about the choice of $d_{1}$ in a non-smooth setting, see \cite{FJ} where the authors obtain an improved version of \eqref{eq:stabP} with $d_{1}$ being a stronger distance than the Fraenkel asymmetry (see also \cite{Neumayer} for the anisotropic case).
\item When writing the Taylor formula:
\begin{equation}
\label{formule:Taylor:Young}
J((\Id+\theta)(\Omega^*)) -J(\Omega^*)=  \frac{1}{2}\  \ell_{2}(\theta\cdot \n,\theta\cdot \n) + {\scriptscriptstyle \mathcal{O} }(\| { \theta}\|_{\Theta}^2).
\end{equation}
we have two issues:
\begin{itemize}
\item first, the remainder depends a priori on the full norm of $\theta$, while the second order term is only controlled with the norm of $\theta\cdot\n$,
\item the norm of differentiability $\Theta$ is in most cases stronger than the norm of coercivity given in the previous item, namely $\sH^{s_{2}}$,
\end{itemize}
so it is a priori not possible to control the sign of the term $\frac{1}{2}\  \ell_{2}(\theta\cdot \n,\theta\cdot \n) + {\scriptscriptstyle \mathcal{O} }(\| { \theta}\|_{\Theta}^2)$.
To solve the first issue, we show that we can restrict to normal perturbations
(this will turn out to be a significant simplification when dealing with constraints, see below). The second issue motivates assumption {\bf(IT}$\!\!~_{\sH^{s},X}${\bf)}. As far as we know, this phenomenon was first observed (in the context of shapes) when minimizing the perimeter as it is naturally differentiable in $\sW^{1,\infty}$ while the coercivity may only happen for the $\sH^1$-norm: see
 \cite{Fuglede} for the case of the 
 classical isoperimetric problem whose solution is the ball (see the next step to explain how we handle the translation invariance of the functional and the volume constraint) 
 and  \cite[Proof of Theorem 6]{Grosse-Brauckmann}, \cite[Equation 3.23]{BT}, \cite[Equation (1)]{White} in the more general framework of constant mean curvature surfaces
. See also \cite{Vogel,Vogel2} for similar observations with a different parametrization. 
Various geometric examples have been handled in the literature since these first examples, see \cite{DePM,FFMMM,BDF,Neumayer}. 
This difficulty is also well-known in the literature on second order optimality conditions in infinite dimension, especially applied to optimal control theory, see for example \cite{CT15Sec}. 
\item Note that Theorem \ref{th:main} has value only if one provides explainations on how to show assumptions {\bf(C}$\!\!~_{\sH^{s_{2}}}${\bf)} and {\bf(IT}$\!\!~_{\sH^{s},X}${\bf)} on concrete examples: Section \ref{ssect:exampleshapeder} and Section \ref{section:Chs} (and Theorem \ref{th:CHX} below) are dedicated to this issue.
\item {\bf Constraints and invariance:} in the isoperimetric problem, whose quantitative version is recalled in \eqref{eq:stabP}, we have to handle two extra difficulties: the functional is translation invariant, and  there is  a volume constraint in the optimization problem. Therefore one cannot expect \eqref{eq:P''} to be satisfied, see Section \ref{ssect:main:contrainte} for the suitable replacements.
 In \cite{DambrinePierre,Dambrine} the authors carefully handle the volume constraint by building a path preserving the volume and being almost normal, and prove that an estimate like {\bf(IT}$\!\!~_{\sH^{s_{2}},X}${\bf)} is valid for this more involved path. In \cite{AFM} a very similar approach is given, and they also handle the translation-invariance (which is not there in the example of \cite{DambrinePierre,Dambrine}) which implies a lot a technicalities.\\[2mm]
{\it 
We drastically simplify the presentation of \cite{DambrinePierre,Dambrine,AFM} by using an exact penalization method. More precisely (see {\bf Theorem \ref{th:main:contrainte}}), we prove that under the assumptions {\bf(C}$\!\!~_{\sH^{s_{2}}}${\bf)}
, the constrained optimality conditions (see Definition \eqref{def:stable}) implies the unconstrained conditions \eqref{eq:P''} when $J$ is replaced by
\begin{equation}\label{eq:penalization}
J_{\mu,C}=J-\mu \Vol+C\left(\Vol-V_{0}\right)^2+C\left\|\Bar-\Bar(\Om^*)\right\|^2,
\end{equation}
where $\mu$ is the Lagrange multiplier, $C\in(0,\infty)$ is large enough, and $\Bar$ is the barycenter functional. We then apply Theorem \ref{th:main} to $J_{\mu,C}$ 
which implies the constrained local minimality. It is clear, looking at the proofs of our results, that the situation we  describe in this paper is general and can be applied to other constraints or invariance.
}
\end{itemize}

\noindent{\bf 3. Condition {\bf(IT}$\!\!~_{\sH^{s},X}${\bf)} for $\lambda_{1}$ in Sobolev spaces:} 
As we mentioned before, the norm discrepancy issue has been dealt with for geometric functionals earlier in the literature: we briefly give proofs in Section \ref{sssect:geometric} for the sake of completeness.
In the specific context of shape optimization involving PDE, the issue was overcome in \cite{DambrinePierre,Dambrine}. More recently a very similar approach can be found in \cite{AFM}, see also Section \ref{ssect:literature}. The situation is much more involved than for geometric functionals, as it is much harder to write the remainder term in order to show condition {\bf(IT}$\!\!~_{\sH^{s},X}${\bf)} for suitable spaces.

In this case, it is more convenient to define a slightly different condition: given $\Om$, $s\in[0,1]$ and $X\subset W^{1,\infty}(\partial\Om)$ a Banach space, assuming that $\j_{\Om}$ is $\sC^2$ in a neighborhood of 0 in  $X$, we
say that $J$ satisfies condition {\bf(IC}$\!\!~_{\sH^{s},X}${\bf)} (for ``improved continuity'' in $h$) at $\Om$ if:
\begin{center}
\begin{minipage}{0.85\textwidth}
\begin{description}
\item[\hspace{-6mm}(IC$\!\!~_{\sH^{s},X}$)] there exist $\eta>0$ and a modulus of continuity $\omega$ such that for every domain $\Om_{h}$ with $\|h\|_{X}\leq \eta$, and all $t\in[0,1]$:
$$\left|j''(t)-j''(0)\right|\leq \omega(\|h\|_{X})\|h\|_{\sH^{s}}^2,$$
\end{description}
\end{minipage}
\end{center}
where $j:t\in[0,1]\mapsto J(\Omega_{t})$ for the path $(\Omega_{t})_{t \in [0,1]}$ connecting $\Om$ to $\Om_{h}$, and defined through its boundary
\begin{equation}
\label{definition:chemin:dans:formes}
\partial\Omega_{t} =\{x+\ th(x)\n(x), \ x\in \partial\Omega\}.
\end{equation}

Using the Taylor formula with integral remainder:
$$J(\Om_{h})-J(\Omega)=\ell_{1}(h)+\frac{1}{2}\ell_{2}(h,h)+\int_{0}^1 [j''(t)-j''(0)] (1-t) dt,$$
 it is easy to see that condition {\bf(IC}$\!\!~_{\sH^{s},X}${\bf)} implies {\bf(IT}$\!\!~_{\sH^{s},X}${\bf)}.\\

 We now recall the definition of two classical PDE functionals, $E$ the Dirichlet energy and $\lambda_{1}$  the first Dirichlet-eigenvalue:
\begin{equation}\label{eq:dirichlet}
E(\Om)=\min \left\{\frac{1}{2}\int_{\Om}|\nabla u|^2-\int_{\Om}u, \;\;u\in \sH^1_{0}(\Om)\right\}, \;\;\;\lambda_{1}(\Om)=\min \left\{\frac{\int_{\Om}|\nabla u|^2}{\int_{\Om}u^2}, \;\;u\in \sH^1_{0}(\Om)\right\}
\end{equation}

 In this paper, we prove the new following result:
\begin{theorem}\label{th:CHX}
Let $\Om$ be a bounded $\sC^3$ domain.
Then $E$ and $\lambda_{1}$ satisfy {\bf(IC}$\!\!~_{\sH^{1/2},\sW^{2,p}}${\bf)} for $p>d$.
\end{theorem}

This result is an improvement of the previous litterature in several ways: first in \cite{DambrinePierre,Dambrine} the authors prove {\bf(IC}$\!\!~_{\sH^{1/2},\sC^{2,\alpha}}${\bf)} for functionals similar to $E$, which is weaker. In \cite{AFM} the authors obtain a condition similar to {\bf(IC}$\!\!~_{\sH^{1/2},\sW^{2,p}}${\bf)} for $p>d$, but for a PDE functional which provides more regularity (see  \eqref{eq:AFMfunction}).
 Note that this improvement about spaces is not just a technical issue, as in \cite{AFM} the choice of $\sW^{2,p}$ rather than $\sC^{2,\alpha}$ is relevant for the second step of the strategy (described page \pageref{page:strategy}) when proving stability in an $\sL^1$-neighborhood (\cite[Section 4]{AFM}): indeed their regularization procedure needs to allow discontinuities of the mean curvature, see equation (4.9) in the proof of \cite[Theorem 4.3]{AFM}.
 Finally, as far as we know, the case of $\lambda_{1}$ was not known in the literature. 

\subsection{Old and new applications}

In order to justify the interest of our general statements, we provide several examples of functionals for which Theorems \ref{th:main} or \ref{th:main:contrainte} apply. We give here a short list, see Section \ref{sect:app} for more details.

\begin{itemize}
\item First, we retrieve with our results classical statements already existing in the literature: this relies on the computation of the first and second derivatives of the functionals, and the fact that they satisfy conditions {\bf(C}$\!\!~_{\sH^{s_{2}}}${\bf)} and {\bf(IT}$\!\!~_{\sH^{s_{2}},X}${\bf)} (for suitable $s_{2}$ and $X$). This includes the examples of \cite{DambrinePierre,Dambrine,AFM,BdPV}. We believe that despite the degree of generality of our approach, the proofs are less technical and more straightforward. 
\item Second, we 
apply our result to cases where only linear stability was studied: this includes the result 
in \cite{Nitsch} (see Proposition \ref{prop:applications}).
\item We finally provide new examples, which come with minor cost thanks to our results. One generic example we have in mind is the following: if $\Om^*=B$ is a ball of volume $V_{0}\in(0,\infty)$, 
then the conditions of Theorem \ref{th:main:contrainte} 
 are fulfilled for the functional $J=P+\gamma E$ ($P$ is the perimeter and $E$ is the Dirichlet energy) when $\gamma\geq \gamma_{0}$ and $\gamma_{0}\in(-\infty,0)$ (whose optimal value is given in Proposition \ref{prop:constante:optimale}), and we can conclude from our strategy that the ball is a local minimizer (in a $\sW^{2,p}$ neighborhood for $p>d$) of the following optimization problem
\begin{equation}\label{eq:P+gE}
\min\left\{ P(\Om)+\gamma E(\Om),\;\;|\Om|=V_{0}\right\}.
\end{equation}
{\it For $\gamma\geq 0$ this result is not surprising, since the ball minimizes both the perimeter and the Dirichlet energy. But this result is new and surprising when $\gamma$ is nonpositive
: there is  a competition between minimizing the perimeter and maximizing the Dirichlet energy. Another way to state the result is to say that
\begin{equation}\label{eq:quotient}
\frac{P(\Om)-P(B)}{E(\Om)-E(B)}\;\geq |\gamma_{0}|,\;\;\;\;\;\;\forall \Om\in\mathcal{V}(B),
\end{equation}
where $\mathcal{V}(B)=\{\Om=B_{h}, |\Om|=|B|\textrm{ and }\|h\|_{\sW^{2,p}}<\eta\}$, for some $\eta>0$.
}\\[2mm]
For a problem related to \eqref{eq:P+gE} when $\gamma<0$, see also \cite{GNR}. 
We also notice that local optimality of the ball is no longer valid when one consider a neighborhood of $\Om^*$ for a weak distance, for example the Frankel asymmetry, see Section \ref{ssect:non-stab}. Especially it means that the second step of the strategy described page \pageref{page:strategy} does not apply to \eqref{eq:P+gE} if $\gamma<0$, despite the fact that sets are minimizing the perimeter, and shows in what way the two steps of this strategy have different degree of generality.\\[2mm]
{\it In addition to this example, we obtain several new local isoperimetric inequalities, see {\bf Proposition \ref{prop:applications}} in Section \ref{subsection:applications1}.}

\end{itemize}

In Section \ref{sect:derivative}, we show a new proof of the Structure Theorem for second order shape derivatives. We also  
recall the classical examples of second order shape derivatives, noticing in particular in which norms they are continuous (which leads to the value of $s_{2}$ from assumption {\bf(C}$\!\!~_{\sH^{s_{2}}}${\bf)}), and focus on the case of the ball for which we diagonalize the shape hessians (which leads to the classical stability properties of the ball for these functionals). In Section \ref{sect:proofs} we state the version of Theorem \ref{th:main} adapted to the constrained/invariant case, we discuss the coercivity assumptions proving \eqref{eq:coercivity} (Lemma \ref{lem:coercivity}), and we prove Theorems \ref{th:main} and \ref{th:main:contrainte}. In Section \ref{section:Chs} we discuss assumption {\bf(IT}$\!\!~_{\sH^{s_{2}},X}${\bf)}, in particular we recall and improve existing results, and show Theorem \ref{th:CHX}. In Section \ref{sect:app}, we explain how our general results allow to retrieve known results, and then provide some new local isoperimetric inequalities, see Proposition \ref{prop:applications}. All these applications are simple corollaries of our main results, combined with the computations reminded in Section \ref{sect:derivative}. In the last Section, we show that similar results in non-smooth neighborhoods cannot be achieved with the same degree of generality.

\section{On Second order shape derivatives.}
\label{sect:derivative}

As for all the examples of this paper, we assume $J$ is a shape functional such that $\theta\in \Theta\mapsto J((\Id+\theta)(\Om))$ is of class $\sC^2$ in a neighborhood of $0$ in $\Theta=\sW^{1,\infty}(\R^d,\R^d)$. This simplifies the presentation, though similar proofs can be adapted to other functional spaces, see Remark \ref{rk:spaces}.

\subsection{Structure Theorem}\label{ssect:shapederivative}

It is well-known since Hadamard's work that the shape gradient is a distribution supported on the moving boundary and acting on the normal component of the deformation field. The second order shape derivative also has a specific structure as stated by A. Novruzi and M. Pierre in \cite{NovruziPierre}. We quote their result, and provide a new proof:

\begin{theorem}[Structure Theorem of first and second shape derivatives]
 \label{th:structure}
Let $\Theta=\sW^{1,\infty}(\R^d,\R^d)$, $\Om$ an open bounded domain of $\R^d$ and $J$ a real-valued shape function defined on $\mathcal{V}(\Om)=\{(\Id+\theta)(\Om), \|\theta\|_{\Theta}<1\}$. Let us define the function $\mathcal{J}_{\Om}$ on $\{\theta\in\Theta, \|\theta\|_{\Theta}<1\}$ by
$$\mathcal{J}_{\Om}(\theta)=J[(\Id+\theta)(\Omega)].$$
\begin{description}
\item[(i)] If $\mathcal{J}_{\Om}$ is differentiable at $0$ and $\Om$ is $\sC^2$, then there exists a  continuous linear form $\ell_{1}$  on $\sC^1(\partial\Omega)$ such that $\mathcal{J}_{\Om}'(0) \xi= \ell_{1}(\xi_{|\partial\Om}\cdot \n)$ for all $\xi \in \sC^{\infty}(\mathbb{R}^d,\mathbb{R}^d)$, where $\n$ denotes the unit exterior normal vector on $\partial\Om$.

\item[(ii)] If moreover $\mathcal{J}_{\Om}$ is twice differentiable at $0$ and $\Om$ is $\sC^3$, then there exists a  continuous symmetric bilinear form $\ell_{2}$  on $\sC^2(\partial\Omega) \times \sC^2(\partial\Omega)$ such that  for all $(\xi,\zeta) \in \sC^{\infty}(\mathbb{R}^d,\mathbb{R}^d)^2$
\begin{equation}\label{eq:structure2}
\mathcal{J}_{\Om}''(0) (\xi,\zeta)= \ell_{2}(\xi\cdot \n,\zeta\cdot \n) + \ell_{1}( \B(\zeta_{\tau}, \xi_{\tau})-\nablat (\zeta\cdot \n)\cdot\xi_{\tau}- \nablat (\xi\cdot \n)\cdot\zeta_{\tau} ),
\end{equation}
where $\nablat$ is the tangential gradient, $\xi_{\tau}$ and $\zeta_{\tau}$ stands for the tangential components of $\xi$ and $\zeta$, and $\B=D_{\tau}\n$ is the second fondamental form of $\partial\Om$.
\end{description}
\end{theorem}
With respect to this work, it is important to notice that at a critical domain for $J$, the shape hessian is reduced to $\ell_{2}$ and hence does not see the tangential components of the deformation fields. 

\begin{remark}
The requirement that $\Om$ is bounded is made only to simplify the presentation: the result remains valid replacing $\sC^1(\partial\Om)$ with $\sC^1_{c}(\partial\Om)$ and localizing the test functions.
\end{remark}

\begin{remark}\label{rk:reg}
As noticed in \cite[p. 225]{HenrotPierre}, with this degree of generality, the regularity assumption on $\Om$ are sharp. We could indeed wonder if $\ell_{1}$ can be extended as a continuous linear form on $\sC^0(\partial\Om)$; this is not true in general if $\Om$ is only assumed to be $\sC^1$, as the example of the perimeter shows (it would mean that the mean curvature is a Radon measure, which is not true for a $\sC^1$ domain).
Moreover, our strategy provides $\ell_{2}$ being continuous for the $\sC^2(\partial\Om)$-norm, while \cite{NovruziPierre} gives a better result with $\ell_{2}$ being continuous for the $\sC^1(\partial\Om)$-norm.
However, if we assume that $\ell_{1}$ can be extended as a continuous linear form on $\sC^0(\partial\Om)$, then point ${\bf (ii)}$ is valid assuming $\Om$ of class $\sC^2$ only, and $\ell_{2}$ is then continuous for the $\sC^1$-norm; it is easy to see how the proof adapts to this case, and we retrieve then an optimal result, see also \cite[Remark 2.8, Corollary 2.9]{NovruziPierre}, .
\end{remark}

\begin{remark}\label{rk:spaces}
Compare to the result in \cite{NovruziPierre}, we restricted ourself to the space $\Theta=\sW^{1,\infty}$ (or similarly $\sC^{1,\infty}:=\sW^{1,\infty}\cap\sC^1$, see the proof below), as all the functionals of this paper are differentiable in this space. Of course, the same proof can be adapted to spaces like $\sW^{k,\infty}$ for $k\geq 2$, which is important to handle higher order geometric or PDE functional.
\end{remark}
\begin{remark}
When $\xi=\zeta$, we get
$$\mathcal{J}_{\Om}''(0).(\xi,\xi)= \ell_{2}(\xi\cdot \n,\xi\cdot \n) + \ell_{1}(Z_{\xi}),\textrm{ where }Z_{\xi}= \B( \xi_{\tau}, \xi_{\tau})-2\nablat (\xi\cdot \n)\cdot\xi_{\tau}.$$
As noticed in \cite[Equation (7.5)]{AFM}, the term $Z_{\xi}$ can have be written in a different way:
$$Z_{\xi}=(\xi\cdot \n)\div(\xi)-\divt\left(\xi_{\tau}(\xi\cdot\n)\right)-H(\xi.\n)^2.$$
The advantage of $Z_{\xi}$ is that it clearly vanishes when $\xi_{\tau}=0$, but this second formulation can also have advantages, especially when $\xi$ has a vanishing divergence (as it is the case in \cite{AFM}) or when there are simplifications as it is the case for the volume (see Lemma \ref{lemme:expression:derivees} for the first equality):
\begin{equation}\label{eq:volume''}
\Vol''(\Om).(\xi,\xi)=\int_{\partial\Om} H(\xi\cdot\n)^2+\int_{\partial\Om}Z_{\xi} = \int_{\partial\Om}(\xi\cdot \n)\div(\xi).
\end{equation}
\end{remark}

 \begin{remark}\label{rk:chain}
It is sometimes considered that first and second order derivatives described in the previous theorem cannot handle the differentiation of $t\mapsto J(T_{t}(\Om))$ where $T\in \sC^2([0,\alpha[,\Theta)$ is not of the form $T_{t}=\Id+t\xi$. This is not true, as the chain rule formula easily gives (and is allowed when we have proven the Fr\'echet-differentiability of the functionals, which is valid for all the functionals of this paper):
$$\frac{d^2}{dt^2}J(T_{t}(\Om))=\mathcal{J}_{\Om}''(T_{t}-\Id).\left(\frac{d}{dt}T_{t},\frac{d}{dt}T_{t}\right)+\mathcal{J}_{\Om}'(T_{t}-\Id).\left(\frac{d^2}{dt^2}T_{t}\right)$$
and the structure result can then be applied. For example, if $T_{t}$ is the flow of the vector field $\xi$ as it is usually done in the speed method, we obtain:
$$\frac{d^2}{dt^2}J(T_{t}(\Om))_{|t=0}=\mathcal{J}_{\Om}''(0).\left(\xi,\xi\right)+\mathcal{J}_{\Om}'(0).\left((D\xi)\cdot\xi\right).$$
Another interesting case is that if $\Om$ is a critical shape for $J$, namely $\mathcal{J}_{\Om}'(0)\equiv 0$, and if $T_{t}=\Id+t\xi+\frac{t^2}{2}\eta+o(t^2)$ where $o(t^2)$ has to be understood with the norm $\|\cdot\|_{\Theta}$, then we always have
$$\frac{d^2}{dt^2}J(T_{t}(\Om))_{|t=0}=\ell_{2} (\xi\cdot\n,\xi\cdot\n).$$
\end{remark}

\noindent{\bf Proof of Theorem \ref{th:structure}}
We only focus on the second order derivative, as the first order one is classical (see for example \cite{HenrotPierre,DZ01Sha,LP}). For $k\in \N$, we define $\sC^{k,\infty}:=\sC^k\cap \sW^{k,\infty}(\R^d,\R^d)$ equipped with the same norm as $\sW^{k,\infty}$, which is also a Banach space and is more adapted to approximation by smooth functions. 
Let $\xi,\zeta\in \sC^{\infty}$  compactly supported, and denote $\gamma, \delta$ their respective flow, namely
$$\left\{\begin{array}{ccl}\frac{d}{dt}\gamma_{t}(x)&=&\xi(\gamma_{t}(x))\\\gamma_{0}(x)&=&x\end{array}\right.\;\;\;\;\;\;
\left\{\begin{array}{ccl}\frac{d}{dt}\delta_{t}(x)&=&\zeta(\delta_{t}(x))\\\delta_{0}(x)&=&x\end{array}\right.$$
Thanks to our assumption on $\xi$, we easily check that the function $T\in\Theta\mapsto \xi\circ(T+\Id)\in\Theta$ is locally Lipschitz and $\sC^2$, and therefore these ODE admits solutions defined on $(-t_{0},t_{0})$ and such that $[t\mapsto\gamma_{t}-\Id ,t\mapsto \delta_{t}-\Id]$ are in $\sC^2((-t_{0},t_{0}),\Theta)$. As a consequence, $(t,s)\mapsto\gamma_{s}\circ\delta_{t}-\Id\in \Theta$ is well-defined in a neighborhood of $(0,0)$ and $\sC^2$. 

Let now assume that $\zeta\cdot\n=0$. Then from classical criterion of invariance of sets with the flow, we have $\delta_t(\Om)=\Om$ for every $t$ small enough, so $J(\gamma_s\circ\delta_t(\Om))=\J_\Om(\gamma_s\circ\delta_t-\Id)$ is independent of $t$. Differentiating successively with respect to $t$ and $s$ at $(0,0)$, we obtain:
   $$\J_\Om''(0).(\xi,\zeta)+\J_\Om'(0).(D\xi\cdot\zeta)=0,\;\;\forall\xi\in\sC^\infty_{c},\;\;\forall \zeta\in K\cap \sC^\infty_{c},$$
 where $K=\Ker(\Phi)$ and $\Phi:\xi\in\Theta\mapsto\xi_{|\partial\Om}\cdot\n$.

We define $b : (\xi,\zeta)\in\sC^{2,\infty}\times\sC^{1,\infty} \mapsto \J_\Om''(0).(\xi,\zeta)+\J_\Om'(0).(D\xi\cdot\zeta)$ which is a continuous bilinear functional that vanishes for $\zeta\in K$, for any fixed $\xi$. Therefore we can write, using quotient properties,
$b(\xi,\zeta)=\widetilde{b}(\xi,\zeta_{|\partial\Om}\cdot\n)$ where
$\widetilde{b} : \sC^{2,\infty}\times \sC^1(\partial\Om) \rightarrow \R$ is continuous (a priori we only get that $\widetilde{b}$ is separately continuous but with Banach-Steinhaus Theorem, it implies continuity), as $\Phi$ induces an isomorphism between $\Theta/K$ and $\Phi(\Theta)=\sC^1(\partial\Om)$ equipped with the $\sC^1$ norm (using that $\Om$ is of class $\sC^2$).
Moreover by construction we have:
$$\J_\Om''(0).(\xi,\zeta)+\J_\Om'(0).(D\xi\cdot\zeta)=\tilde{b}(\xi,\zeta_{|\partial\Om}\cdot\n), \;\;
\forall\; \xi,\zeta\in\sC^{2,\infty}\times\sC^{1,\infty}.$$
Using the symmetry of $\J_\Om''(0)$, we can write, for every $(\xi,\zeta)\in \sC^{2,\infty}$:
\begin{equation}\label{eq:sym}
\tilde{b}(\zeta,\xi_{|\partial\Om}\cdot\n)-\tilde{b}(\xi,\zeta_{|\partial\Om}\cdot\n)=\J_\Om'(0).(D\zeta\cdot\xi-D\xi\cdot\zeta)
\end{equation}
Our goal is now to apply this formula to $\zeta_{\n}$ the normal component of $\zeta$, which needs to be extended as a vector field on $\R^d$. To that end, we introduce $P_{\partial\Om}$ the projection on $\partial\Om$, which is well-defined and $\sC^2$ in a neighborhood of $\partial\Om$, as $\Om$ is assumed to be $\sC^3$ (see for example \cite{DZ01Sha}). Then if $\varphi$ is defined on $\partial\Om$, we set $\widetilde{\varphi}(x)=\varphi(P_{\partial\Om}x)\chi(x)$ where $\chi$ is a smooth function with $\chi=1$ in a neighborhood of $\partial\Om$, and $\chi=0$ outside a compact set (in other words, $\varphi$ is extended so that it is constant in the normal direction). This operator $\varphi\mapsto\widetilde{\varphi}$ is continuous from $\sC^2(\partial\Om)$ to $\sC^{2,\infty}$.
Let us define then $\zeta_{\n}:=\widetilde{(\zeta\cdot\n)\n}$ the extension of the normal component of $\zeta$.
Defining
the bilinear form $\ell_0(\varphi_1,\varphi_2)=\tilde{b}(\widetilde{\varphi_1\n},\varphi_2)$, continuous on 
$\sC^2(\partial\Om)\times\sC^1(\partial\Om)$ (and a priori non symmetric), we obtain
$$\begin{array}{rcl}
\J_\Om''(0).(\xi,\zeta)&=&\tilde{b}(\xi,\zeta\cdot\n)-\J_\Om'(0).(D\xi\cdot\zeta)\\
&=&\tilde{b}(\zeta_{\n},\xi\cdot\n)-
\J_\Om'(0).(D\zeta_{\n}\cdot\xi-D\xi\cdot\zeta_{\n})-\J_\Om'(0).(D\xi\cdot\zeta)\:\:\:\:\:\textrm{(using \eqref{eq:sym})}\\
&=&\ell_0(\zeta\cdot\n,\xi\cdot\n)-\J_\Om'(0).(D\zeta_{\n}\cdot\xi - D\xi\cdot\zeta_{\n}+ D\xi\cdot\zeta)\\
&=&\ell_0(\zeta\cdot\n,\xi\cdot\n)-\J_\Om'(0).(D\zeta_{\n}\cdot\xi+ D\xi\cdot\zeta_\tau)
\end{array}
$$
where $\zeta_\tau=\zeta-\zeta_{\n}$. We now use $D\zeta_{\n}=D_\tau\zeta_{\n}$, because thanks to our choice of extension operator, $\zeta_{\n}$ is constant in the direction $\n$ (by definition, $D_\tau a=Da-(Da\cdot\n)\n$), and therefore $D\zeta_{\n}\cdot\xi=D_\tau\zeta_{\n}\cdot\xi_\tau$. Moreover, $D\xi\cdot\zeta_\tau=D_\tau\xi\cdot\zeta_\tau$.

Using a symmetrization of the previous formula, we obtain
$$\begin{array}{rcl}
\J_\Om''(0).(\xi,\zeta)&=&\frac{1}{2}\Big[\ell_0(\zeta\cdot\n,\xi\cdot\n)+\ell_0(\xi\cdot\n,\zeta\cdot\n)-\J_\Om'(0).(D_\tau\zeta_{\n}\cdot\xi_\tau+D_\tau\xi\cdot\zeta_\tau+D_\tau\xi_{\n}\cdot\zeta_\tau+D_\tau\zeta\cdot\xi_\tau)\Big]\\[3mm]
&=&\ell_2(\xi\cdot\n,\zeta\cdot\n)-\frac{1}{2}\J_\Om'(0)\cdot\Big(2D_\tau\zeta\cdot\xi_\tau+2D_\tau\xi\cdot\zeta_\tau-D_\tau\xi_\tau\cdot\zeta_\tau-D_\tau\zeta_\tau\cdot\xi_\tau\Big)
\end{array}
$$
where we defined $\ell_2(\xi\cdot\n,\zeta\cdot\n)=\frac{1}{2}(\ell_0(\zeta\cdot\n,\xi\cdot\n)+\ell_0(\xi\cdot\n,\zeta\cdot\n))$, which is a continuous bilinear form on $\sC^2(\partial\Om)^2$.

From the structure of the first order derivative, and using the formula
$$\transposee{D_\tau}\xi_\tau\cdot\n+\transposee D_\tau\n\cdot\xi_\tau=0$$
(obtained by tangentially differentiating $\xi_\tau\cdot\n=0$), we finally obtain (using the $\C^3$ regularity of $\partial\Om$ so that $D_{\tau}\n$ belongs to the space of definition of $\ell_{1}$)
$$\begin{array}{rcl}
\J_{\Om}''(0).(\xi,\zeta)&=&\ell_2(\xi\cdot\n,\zeta\cdot\n)-\frac{1}{2}\ell_1\Big((2D_\tau\zeta\cdot\xi_\tau+2D_\tau\xi\cdot\zeta_\tau)\cdot\n-\zeta_\tau\cdot(\transposee D_\tau\xi_\tau\cdot\n)-\xi_\tau\cdot(\transposee D_\tau\zeta_\tau\cdot\n)\Big)\\[3mm]
&=&\ell_2(\xi\cdot\n,\zeta\cdot\n)+\ell_1\Big((D_{\tau}\n \cdot\zeta_{\tau})\cdot \xi_{\tau}-\nablat (\zeta\cdot \n)\cdot\xi_{\tau}- \nablat (\xi\cdot \n)\cdot\zeta_{\tau} \Big)
\end{array}
$$
(where we used that $D_{\tau}\n$ is symmetric), which concludes the proof (a priori, $\ell_0$ depends on the extension operator that has been chosen, but as in the final formula the extension only appears in $\ell_2$ which does not depend of the extension operator).
\qed

\subsection{Examples of shapes derivatives}\label{ssect:exampleshapeder}

For an open bounded (smooth enough) set $\Omega \subset \R^d$, we consider in this section (and in the rest of the paper) its volume $|\Omega|$, its perimeter $P(\Omega)=\H^{d-1}(\partial\Om)$, its Dirichlet energy $E(\Omega)$ and its first eigenvalue of the Dirichlet Laplace operator $\lambda_{1}(\Om)$ (see \eqref{eq:dirichlet}). The existence and computations of the shape derivatives of these functionals are well known, see for example \cite[Chapter 5]{HenrotPierre}.
 We denote the mean curvature (understood as the sum of the principal curvatures of $\partial\Omega$)  by $H$, $\B=D_{\tau} \n$ is the second fundamental form of $\partial\Omega$, and $\|\B\|^2$ 
 is the sum of the squares of the principal curvatures of $\partial\Omega$.
\begin{lemma}[Expression of shape derivatives]
\label{lemme:expression:derivees}
If $\Omega$ is $\sC^2$, one has, for any $\varphi\in \sC^\infty(\partial\Om)$,
\begin{itemize}
\item $\displaystyle{\ell_{1}[\Vol](\Omega).\varphi=\int_{\partial\Omega}\varphi,\hspace{2cm}
 \ell_{2}[\Vol](\Omega).(\varphi,\varphi)=\int_{\partial\Omega} H \varphi^2.}$\\
\item
$\displaystyle{\ell_{1}[P](\Omega).\varphi=\int_{\partial\Omega}H \varphi,\hspace{1.9cm}
\ell_{2}[P](\Omega).(\varphi,\varphi)=\int_{\partial\Omega} |\nablat \varphi|^2+\int_{\partial\Omega} \left[H^2-{\|\B\|^2}
\right] \varphi^2}$\\
\item
$\displaystyle{\ell_{1}[E](\Omega).\varphi= - \cfrac{1}{2}\int_{\partial\Omega} (\partial_{n}u)^2 \varphi, \hspace{0.7cm}
 \ell_{2}[E](\Omega).(\varphi,\varphi)={\langle  \partial_{n} u \ \varphi,\Lambda(\partial_{n}u \ \varphi) \rangle_{\sH^{1/2}\times\sH^{-1/2}}} +\int_{\partial\Omega}\left[\partial_{n} u+ \frac{1}{2} H (\partial_{n}u)^2\right] \varphi^2}$\\[2mm]
where  $u\in\sH^1_{0}(\Omega)$ is the unique solution to $-\Delta u=1$, $\Lambda:\ \sH^{1/2}(\partial \Omega) \rightarrow  \sH^{-1/2}(\partial \Omega)$ is the Dirichlet-to-Neumann map defined as $\Lambda(\psi) = \partial_{n} \He(\psi)$ where $\He$ is the harmonic extension operator from $\sH^{1/2}(\partial\Omega)$ into $\sH^{1}(\Omega)$:
\begin{equation}
\label{definition:V(h)}
-\Delta \He(\psi)=0 \text{ in }\Omega, \ \ \ \He(\psi) =\psi  \text{ on }\partial\Omega,
\end{equation}
\item
$\displaystyle{\ell_{1}[\lambda_{1}](\Omega).\varphi= - \int_{\partial\Omega} (\partial_{n}v)^2 \varphi, \hspace{0.9cm}
\ell_{2}[\lambda_{1}](\Omega).(\varphi,\varphi)= \int_{\partial\Omega}  2 w(\varphi)\ \partial_{n} w(\varphi)+H (\partial_{n}v)^2 \varphi^2}$\\[2mm]
where $v$ is the normalized eigenfunction (solution in $\sH^1_{0}(\Omega)$ of $-\Delta v=\lambda_{1} v$ with $v\geq0$ in $\Omega$ and $\|v\|_{\sL^2(\Omega)}=1$) and  $w(\varphi)$ is the solution of
\begin{equation}
\label{definition:w}
\left\{
\begin{array}{rcl}
 -\Delta w(\varphi)&=&\lambda_{1} w(\varphi)  - v  \displaystyle\int_{\partial\Omega} (\partial_{n}v)^2 \varphi \text{ in }\Omega, \\[10pt]
w(\varphi)& =&-\varphi  \partial_{n} v\text{ on }\partial\Omega,\\[5pt]
\displaystyle\int_{\Omega} v \ w(\varphi)&=&0.
\end{array}
\right.	
\end{equation}
\end{itemize}

\end{lemma}

A fundamental fact for this work appears here in the expression of the shape hessians. {Even if they are  derived for regular perturbations,} they are naturally defined and continuous on different Sobolev spaces on $\partial\Omega$:
\begin{lemma}[Continuity of shape Hessians]
\label{continuite:derivee:seconde}
If $\Omega$ is $\sC^2$, there is a constant $C>0$ such that 
$$ |\ell_{2}[P](\Omega).(\varphi,\varphi)|\leq C \|\varphi\|^2_{\sH^1(\partial\Omega)}, \,\;\;\;\; |\ell_{2}[\Vol](\Omega).(\varphi,\varphi)|\leq C \|\varphi\|^2_{\sL^2(\partial\Omega)},$$
$$ |\ell_{2}[E](\Omega).(\varphi,\varphi)|\leq C \|\varphi\|^2_{\sH^{1/2}(\partial\Omega),} \;\;\;\;\; |\ell_{2}[\lambda_{1}](\Omega).(\varphi,\varphi)|\leq C \|\varphi\|^2_{\sH^{1/2}(\partial\Omega)}. $$
\end{lemma}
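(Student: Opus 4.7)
The plan is to prove each of the four estimates by directly inspecting the explicit formulas of Lemma \ref{lemme:expression:derivees} and identifying, in each case, the Sobolev space on which the coefficients act as bounded multipliers. The common ingredient is that $C^2$-regularity of $\Om$ makes the geometric quantities ($H$, $D_{\tau}\n$) and the normal traces of the PDE solutions ($\partial_{n}u$, $\partial_{n}v$) continuous on $\partial\Om$, hence bounded; the only delicate case, to which I return at the end, is the treatment of the auxiliary function $w(\varphi)$ entering the second derivative of $\la_{1}$.

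The volume and the perimeter cases are essentially algebraic. For the volume, the Hessian equals $\int_{\partial\Om} H\,\varphi^2$ and $\|H\|_{\sL^\infty(\partial\Om)}<\infty$, which yields the $\sL^2$-continuity at once. For the perimeter, one splits
\begin{equation*}
\ell_{2}[P](\Om).(\varphi,\varphi) = \int_{\partial\Om} |\nablat \varphi|^2 + \int_{\partial\Om} \bigl[H^2 - \mathrm{Tr}(\,^T\!D_{\tau}\n\, D_{\tau}\n)\bigr]\varphi^2,
\end{equation*}
and bounds each piece separately: the first by the $\sH^1$-seminorm, the second by a zeroth-order estimate since the bracket is bounded on $\partial\Om$.

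For the Dirichlet energy, the key is the continuity of the Dirichlet-to-Neumann map $\Lambda\colon \sH^{1/2}(\partial\Om)\to \sH^{-1/2}(\partial\Om)$, which gives
\begin{equation*}
\bigl|\langle -\partial_{n}u\,\varphi,\,\Lambda(-\partial_{n}u\,\varphi)\rangle_{\sH^{1/2}\times\sH^{-1/2}}\bigr| \leq C\,\|\partial_{n}u\,\varphi\|_{\sH^{1/2}}^2.
\end{equation*}
Elliptic regularity for $-\Delta u=1$ with zero Dirichlet data on the $C^2$ domain $\Om$ makes $\partial_{n}u$ regular enough on $\partial\Om$ to be a bounded multiplier on $\sH^{1/2}(\partial\Om)$, so the leading term is bounded by $C\|\varphi\|_{\sH^{1/2}}^2$. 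The lower-order remainder $\int_{\partial\Om}[\partial_{n}u+\tfrac12 H(\partial_{n}u)^2]\varphi^2$ is controlled by $\|\varphi\|_{\sL^2}^2\leq\|\varphi\|_{\sH^{1/2}}^2$.

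The eigenvalue estimate is the main obstacle, since the formula for $\ell_{2}[\la_{1}]$ involves the implicitly defined function $w(\varphi)$. The strategy is to use Green's formula together with the PDE \eqref{definition:w} and the orthogonality $\int_{\Om} v\,w(\varphi)=0$ to convert the boundary integral into an interior one,
\begin{equation*}
\int_{\partial\Om} 2\,w(\varphi)\,\partial_{n}w(\varphi) = 2\|\nabla w(\varphi)\|_{\sL^2(\Om)}^2 - 2\la_{1}\|w(\varphi)\|_{\sL^2(\Om)}^2,
\end{equation*}
so that the estimate reduces to the a priori bound $\|w(\varphi)\|_{\sH^1(\Om)}\leq C\|\varphi\|_{\sH^{1/2}(\partial\Om)}$. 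To establish this, I would lift the Dirichlet datum $-\varphi\,\partial_{n}v$, which belongs to $\sH^{1/2}(\partial\Om)$ with norm $\lesssim \|\varphi\|_{\sH^{1/2}}$ by smoothness of $\partial_{n}v$, to an $\sH^1(\Om)$ function, and then invoke the Fredholm alternative for the self-adjoint operator $-\Delta-\la_{1}$ restricted to $v^\perp$; the orthogonality constraint in \eqref{definition:w} is precisely what makes the problem well-posed. The remaining term $\int_{\partial\Om} H(\partial_{n}v)^2\varphi^2$ is again an $\sL^2$-estimate, so combining all the pieces yields the announced $\sH^{1/2}$-continuity.
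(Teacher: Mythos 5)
Your proof is correct, and it follows exactly the route the paper intends: the paper states this lemma without proof, as an immediate reading-off of the explicit formulas in Lemma \ref{lemme:expression:derivees}, which is what you do, with the boundedness of $H$, $D_{\tau}\n$, $\partial_{n}u$, $\partial_{n}v$ coming from $C^{2}$-regularity and elliptic estimates. The only genuinely nontrivial step you had to supply — the Green's-formula identity $\int_{\partial\Omega}2w(\varphi)\,\partial_{n}w(\varphi)=2\|\nabla w(\varphi)\|_{\sL^2(\Omega)}^2-2\lambda_{1}\|w(\varphi)\|_{\sL^2(\Omega)}^2$ combined with the Fredholm a priori bound $\|w(\varphi)\|_{\sH^1(\Omega)}\leq C\|\varphi\|_{\sH^{1/2}(\partial\Omega)}$ — is sound and is the right way to handle the implicitly defined $w(\varphi)$.
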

Therefore, from this Lemma, it is natural to consider the extension of these bilinear forms to their space of continuity. 

\subsection{The case of  balls}\label{ssect:balls}

In this section, we describe the shape derivatives of the previous functionals when the set $\Om$ is a ball. This will be very efficient when studying if one can apply Theorems \ref{th:main} and \ref{th:main:contrainte} to the ball, see Section \ref{sect:app}.

Let us focus on the ball $B_{1}$ of radius $1$. For the Dirichlet energy $E$, we remark that  $u(x)=(1-|x|^2)/2d$  solves $-\Delta u=1$ in $\sH^1_{0}(B_{1})$ and satisfies $\partial_{n} u=-\frac{1}{d}$ on $\partial B_{1}$. For $\lambda_{1}$, we recall that the eigenvalue and eigenfunction are 
$$\lambda_{1}(B_{1})=\bessel^2 \text{ associated to } v(x)\ =\ \alpha_{d} \ |x|^{1-d/2} \ \fbessel\left({	\bessel \ |x|}\right),$$
where $\bessel$ is the first zero of Bessel's function $\fbessel$ and $\alpha_{d}$ a normalization constant.  
Moreover, from \cite[p. 35]{Henry}, the eigenfunction satisfies 
\begin{equation}
\label{definition:gammad}
\partial_{n} v =\sqrt {\cfrac{2}{P(B_{1})}} \ \bessel:=\beta_{d}, \text{ so that }\beta_{d}^2=\frac{2\lambda_{1}(B_{1})}{P(B_{1})}.
\end{equation}
We obtain the {\bf shape gradients}:
\begin{eqnarray*}
\ell_{1}[\Vol](B_{1}).\varphi    =\int_{\partial B_{1}}\varphi, \label{boule:gradient:volume}&&
\ell_{1}[P](B_{1}).\varphi        = (d-1)\int_{\partial B_{1}}\varphi,  \\
\ell_{1}[E](B_{1}).\varphi        = - \cfrac{1}{2d^2} \int_{\partial B_{1}}  \varphi, &&
\ell_{1}[\lambda_{1}](B_{1}).\varphi  =\ -\  \beta_{d}^2 \int_{\partial B_{1}}  \varphi. 
\end{eqnarray*}
Let us notice that these four shape gradients at balls are colinear. As a consequence, the balls are critical domains for the perimeter, the  Dirichlet energy and $\lambda_{1}$ (or any sum of these functionals) under a volume constraint, and these formula easily provide the value of the Lagrange-multiplier.

{  Let us turn our attention to the {\bf hessians}. The value of $\ell_{2}[\lambda_{1}]$ is a bit more involved, so we deal with it in the next lemma. For the other functionals, it is known from Lemma \ref{lemme:expression:derivees} that:}
\begin{subequations}
\begin{align*}
\ell_{2}[\Vol](B_{1}).(\varphi,\varphi)	&	= (d-1)\int_{\partial B_{1}}  \varphi^2, \\[2mm]
\ell_{2}[P](B_{1}).(\varphi,\varphi)		&	=  \int_{\partial B_{1}} |\nablat \varphi|^2+(d-1)(d-2)\int_{\partial B_{1}} \varphi^2,\\[2mm]
\ell_{2}[E](B_{1}).(\varphi,\varphi)		&= \cfrac{1}{d^2} \langle \varphi ,\Lambda \varphi \rangle_{\sH^{1/2}\times \sH^{-1/2} } - \frac{d+1}{2d^2}  \int_{\partial B_{1}} \varphi^2. 
\end{align*}
\end{subequations}

In order to see that the quadratic forms associated to the Lagrangian are coercive on their natural spaces, it is useful to study the diagonalized form of these Hessians. To that end, we use spherical harmonics defined as the restriction to the unit sphere of harmonic polynomials. 
We recall here facts from \cite[pages 139-141]{SteinWeiss}. We let $\mathcal{H}_{k}$ denote the space of spherical harmonics of degree $k$ (that is, the restriction to $\partial B_{1}$ of homogeneous polynomials in $\R^d$, of degree $k$). It is also the eigenspace of the Laplace-Beltrami operator on the unit sphere associated with the eigenvalue $-k(k+d-2)$. Let $(Y^{k,l})_{1\leq l \leq d_{k}}$ be an orthonormal basis of $\mathcal{H}_{k}$ with respect to the $\sL^2(\partial B_{1})$ scalar product. The 
family $(Y^{k,l})_{k\in \mathbb{N}, 1\leq l \leq d_{k}}$ is a Hilbert basis of $\sL^2(\partial B_{1})$. Hence, any function $\varphi$ in ${  \sL^{2}(\partial B_{1})}$ can be decomposed:
\begin{equation}
\label{decomposition:harmoniques:spheriques}
\varphi(x)= \sum_{k=0}^{\infty} \sum_{l=1}^{d_{k}} \alpha_{k,l}(\varphi) Y^{k,l}(x), \ \ \text{ for }|x|=1.
\end{equation}
Then, by construction, the function $h$ defined by
\begin{equation}\label{eq:h}
h(x) = \sum_{k=0}^{\infty} |x|^k  \sum_{l=1}^{d_{k}} \alpha_{k,l}(\varphi) Y^{k,l}\left(\frac{x}{|x|}\right)
, \ \ \text{ for }|x|\leq 1,
\end{equation}
is harmonic in $B_{1}$ and satisfies $h=\varphi$ on $\partial B_1$. 
\textcolor{black}{Moreover, the sequence of coefficients $\alpha_{k,l}$ characterizes the Sobolev regularity of $\varphi$: indeed $\varphi\in \sH^{s}(\partial B_{1})$ if and only if the sum $ \sum_{k} (1+k^2)^{s}\sum_{l} |\alpha_{k,l}|^2$ converges.}
We can now state the following lemma expressing the previous shape hessians are diagonal on this basis. 

\begin{lemma} \label{diagonalisation:hessiennes}
Using the decomposition \eqref{decomposition:harmoniques:spheriques}, we have ($\beta_{d}$ is the constant defined in \eqref{definition:gammad})
\begin{align*}
\ell_{2}[\Vol](B_{1}).(\varphi,\varphi) & =   \sum_{k=0}^{\infty}\sum_{l=1}^{d_{k}} (d-1) \ \alpha_{k,l}(\varphi)^2 ,\\
\ell_{2}[E](B_{1}).(\varphi,\varphi) & =   \sum_{k=0}^{\infty}\sum_{l=1}^{d_{k}} \left[ \frac{1}{d^2} \ k \ - \frac{d+1}{2d^2}   \right] \alpha_{k,l}(\varphi)^2 ,\\
\ell_{2}[P](B_{1}).(\varphi,\varphi)&= \sum_{k=0}^{\infty}\sum_{l=1}^{d_{k}} \left[k^2+(d-2)k+(d-1)(d-2) \right] \ \alpha_{k,l}(\varphi)^2 ,\\
\ell_{2}[\lambda_{1}](B_{1})(\varphi,\varphi)&= \beta_{d}^2 \left( 3  \alpha^2_{0,1}(\varphi) + \sum_{k=1 }^\infty   \sum_{l=1}^{d_{k}} 2\left[ k+\frac{d-1}{2}- \bessel \cfrac{J_{k+d/2}(\bessel)}{J_{k-1+d/2}(\bessel)}  \right]   \alpha_{k,l}^2(\varphi) \right).
\end{align*}
\end{lemma}

\begin{proof} 
First we check that
$$\int_{\partial B_{1}}\varphi^2= \sum_{k=0}^{\infty}\sum_{l=1}^{d_{k}} \alpha_{k,l}(\varphi)^2, \;\;\;\;\;\int_{\partial B_{1}}|\nablat \varphi|^2=-\int_{\partial B_{1}}\varphi \ \Deltat \varphi =\sum_{k=0}^{\infty}k(k+d-2)\sum_{l=1}^{d_{k}} \alpha_{k,l}(\varphi)^2.$$
Then, we precise the term involving the Dirichlet-to-Neumann map that appears in the shape hessian of the Dirichlet energy. Using $h$ defined in \eqref{eq:h} 
and Green formula, we have:
\begin{eqnarray*}
\langle \varphi ,\Lambda \varphi \rangle_{ \sH^{1/2}\times \sH^{-1/2} }&=&\int_{\partial B_{1}} \varphi \partial_{n}h =\int_{B_{1}} |\nabla h|^2\\
 &= &\int_{0}^1 \left( \int_{\partial B_{r}} \left( (\partial_{n} h)^2 + |\nablat h|^2\right) d\sigma \right)  dr = \int_{0}^1\left( \int_{\partial B_{r}} \left( (\partial_{n} h)^2 - h \Deltat h\right) d\sigma \right) dr\\
& =&   \sum_{k=0}^{\infty}\sum_{l=1}^{d_{k}}  \int_{0}^1  r^{d-1} \left[k ^2 r^{2(k-1)}  +  \frac{k(k+d-2)}{r^2}\ r^{2k}\right] dr\  \alpha_{k,l}(\varphi)^2 \\
&= &   \sum_{k=0}^{\infty}\sum_{l=1}^{d_{k}} \left[\frac{k^2}{2k+d-2}  +  \frac{k(k+d-2)}{2k+d-2} \right]  \alpha_{k,l}(\varphi)^2=    \sum_{k=0}^{\infty}\sum_{l=1}^{d_{k}} k\  \alpha_{k,l}(\varphi)^2.
\end{eqnarray*}
We obtain $\ell_{2}[\Vol],\  \ell_{2}[P]$ and $\ell_{2}[E]$ by gathering these elementary terms. 

Let us now consider the case of the first eigenvalue. 
We apply \cite[p 35]{Henry} (see also \cite{Rayleigh} and \cite{Shimakura}): for a second order volume preserving path, that is $t\mapsto T_{t}$ such that $|T_{t}(\Om)|=|\Om|+o(t^2)$ for small $t$, we have
$$ \left(\frac{d^2}{dt^2}\lambda_{1}(T_{t}(B_{1}))\right)_{|t=0}=\sum_{k=1 }^\infty   \sum_{l=1}^{d_{k}} 2\beta_{d}^2\ \left[ k+d-1- \bessel \cfrac{J_{k+d/2}(\bessel)}{J_{k-1+d/2}(\bessel)}  \right]   \alpha_{k,l}^2(\varphi)$$
where $\varphi=(\frac{d}{dt}T_{t})_{|t=0}\cdot\n$ and we have used the recursive formula for Bessel function $J'_{\nu}(z)=(\nu/z)J_{\nu}(z)-J_{\nu+1}(z)$ to adapt his expression to our notations (\cite[section 9.1.27, p 361]{AbramowitzStegun}). To deduce $\ell_{2}[\lambda_{1}]$ from this computation, we introduce $\theta$ a smooth vector field which is normal on $\partial B_{1}$ and denote $\varphi=\theta\cdot\n$. We assume that $\int_{\partial B_{1}}\varphi=\alpha_{0,1}(\varphi)=0$. It is then clear that there exists $\xi$ such that $T_{t}:=\Id+t\theta+\frac{t^2}{2} \xi$ is volume preserving at the second order, that is to say 
$$\ell_{2}[\Vol](B_{1})(\varphi,\varphi)+\ell_{1}[\Vol](B_{1})(\psi)=0,$$
where $\psi=\xi\cdot\n$. Then we observe that for a smooth shape functional $J$ and for such $t\mapsto T_{t}$,
$$\left(\frac{d^2}{dt^2}J(T_{t}(B_{1}))\right)_{|t=0}=\ell_{2}[J](B_{1})(\varphi,\varphi)+\ell_{1}[J](B_{1})(\psi),$$
and therefore, denoting $\mu$ the Lagrange multiplier such that $\ell_{1}[\lambda_{1}-\mu\Vol](B_{1})=0$, we obtain
\begin{eqnarray*}
\left(\frac{d^2}{dt^2}\lambda_{1}(T_{t}(B_{1}))\right)_{|t=0}&=&\ell_{2}[\lambda_{1}](B_{1})(\varphi,\varphi)+\ell_{1}[\lambda_{1}](B_{1})(\psi)=\ell_{2}[\lambda_{1}](B_{1})(\varphi,\varphi)+\mu\ell_{1}[\Vol](B_{1})(\psi)
\\&=&\ell_{2}[\lambda_{1}](B_{1})(\varphi,\varphi)-\mu\ell_{2}[\Vol](B_{1})(\varphi,\varphi)
\end{eqnarray*}

Then, we get, as here $\mu=-\beta_{d}^2$:
\begin{align*}
\ell_{2}[\lambda_{1}](\varphi,\varphi)&=\left(\frac{d^2}{dt^2}\lambda_{1}(T_{t}(B_{1}))\right)_{|t=0}+\mu\ell_{2}[\Vol](B_{1})(\varphi,\varphi),\\
&=\sum_{k=1 }^\infty   \sum_{l=1}^{d_{k}} 2\beta_{d}^2\ \left[ k+d-1- \bessel \cfrac{J_{k+d/2}(\bessel)}{J_{k-1+d/2}(\bessel)}  \right]   \alpha_{k,l}^2(\varphi) -\beta_{d}^2\sum_{k=0 }^\infty   \sum_{l=1}^{d_{k}}(d-1)a_{k,l}^2(\varphi),\\
&=\sum_{k=1 }^\infty   \sum_{l=1}^{d_{k}}2 \beta_{d}^2\ \left[ k+\frac{d-1}{2}- \bessel \cfrac{J_{k+d/2}(\bessel)}{J_{k-1+d/2}(\bessel)}  \right]   \alpha_{k,l}^2(\varphi).
\end{align*}
It remains to compute the coefficient associated to the mode $k=0$. It suffices to  consider the deformations as $T_{t}(x)=x+tx$ mapping the ball $B_{1}$ onto the ball $B_{1+t}$. Here $\varphi=1$ and $\alpha_{0,1}(\varphi)=P(B_{1})^{1/2}$. Since $\lambda_{1}$ is homogeneous of degree $-2$, we get $\lambda(t):=\lambda_{1}(T_{t}(B_{1}))=(1+t)^{-2} \lambda_{1}(B_{1})$  so that $\lambda''(0)= 6 \lambda_{1}(B_{1})=6 \frac{\lambda_{1}(B_{1})}{P(B_{1})}\alpha_{0,1}(\varphi)^2$.
\end{proof}

\section{Stability Theorems}
\label{sect:proofs}

\subsection{Statement with volume constraint and translation invariance}\label{ssect:main:contrainte}
In the introduction, we gave the unconstrained version of the stability result, see Theorem \ref{th:main}. As in many applications we need to deal with a volume constraint and a translation invariance of the functional, we describe here the corresponding statement (a similar method can be applied to other kinds of invariance or constraints).

\begin{definition}\label{def:stable}
Let $\Om^*$ be a shape of class $\sC^3$ and $J$ a shape functional defined and twice shape differentiable in a neighborhood of $\Om^*$ in $\sW^{1,\infty}$.
\begin{itemize}
\item We say that $\Om^*$ is a critical domain for $J$ under volume constraint if
\begin{equation}\label{eq:critical}
\forall \varphi\in { \sC^\infty(\partial\Om^*)}\textrm{ such that } {  \ell_{1}[\Vol]}(\Om^*).\varphi=\int_{\partial\Om^*}\varphi=0, \;\;\;\;\;\;\;\;{  \ell_{1}[J](\Om^*)}.(\varphi)=0.
\end{equation}
It is equivalent to the existence of $\mu\in\R$ such that
$({  \ell_{1}[J]-\mu \ell_{1}[\Vol])}(\Om^*)=0$, $\mu$ is called a Lagrange multiplier.
\item When $\Om^*$ is a critical domain for $J$ under volume constraint, we say that $\Om^*$ is a strictly stable shape for $J$ under volume constraint and up to translations if 
\begin{equation}\label{eq:stable}
\forall \varphi\in T(\partial\Om^*)\setminus\{0\},\;({  \ell_{2}[J]-\mu \ell_{2}[\Vol])}(\Om^*).(\varphi,\varphi) >0
\end{equation}
where 
\begin{equation}\label{eq:T}
T(\partial\Om^*):=\left\{\varphi\in \sH^s(\partial\Om^*), \;\; \int_{\partial\Om^*}\varphi=0, \;\;\int_{\partial\Om^*}\varphi\overrightarrow{x}=\overrightarrow{0}\right\},
\end{equation}
$\mu$ is the Lagrange multiplier associated and $s\geq 0$ is the lowest index so that $\ell_{2}[J](\Om^*)$ is continuous on $\sH^s(\partial\Om^*)$ (see Lemma \ref{continuite:derivee:seconde}).
\end{itemize}
\end{definition}

\begin{theorem}\label{th:main:contrainte}
Let $\Om^*$ of class $\sC^{3}$, 
and $J$ a shape functional, translation invariant and twice Fr\'echet differentiable on a neighborhood of $\Om^*$ in $\sW^{1,\infty}$. We assume:
\begin{itemize}
\item {\bf Structural hypotheses:} there exists $s_{2}\in(0,1]$ and $X$ a Banach space with $\sC^\infty(\R^d)\subset X\subset \sW^{1,\infty}(\R^d)$
such that $J$ satisfies 
{\bf(C}$\!\!~_{\sH^{s_{2}}}${\bf)} and {\bf(IT}$\!\!~_{\sH^{s_{2}},X}${\bf)}  at $\Om^*$,
\item {\bf Necessary optimality conditions:}
\begin{itemize}
\item $\Om^*$ is a critical shape under volume constraint for $J$,
\item $\Om^*$ is a strictly stable shape for $J$ under volume constraint and up to translations.
\end{itemize}
\end{itemize}
Then 
there exists $\eta>0$ and $c=c(\eta)>0$ such that: 
$$\forall \;\Om=\Om^*_{h}\textrm{ such that }\|h\|_{X}\leq \eta\textrm{ and }|\Om|=|\Om^*|, \;\;\;\;J(\Om)\geq J(\Om^*) + c d_{X}(\Om,\Om^*)^2,$$
where 
\begin{equation}\label{eq:distance}
d_{X}(\Om,\Om^*)=\inf\{\|g\|_{\sH^{s_{2}}(\partial\Om^*)}, g\textrm{ such that }\exists \tau\in\R^d, \;\;\Om+\tau=\Om^*_{g}\}
\end{equation}
\end{theorem}

\subsection{About coercivity and condition  {\bf(C}$\!\!~_{\sH^{s}}${\bf)}}\label{ssect:coercivity}

Usually the coercivity property for the second order derivative (of the functional or of the Lagrangian) has to be proven by hand on each specific example by studying the lower bound of the spectrum of the bilinear form $\ell_{2}$ defined in Theorem \ref{th:structure}, typically thanks to Lemma \ref{diagonalisation:hessiennes}. Nevertheless, when $\ell_{2}$ enjoys some structural property, coercivity can be more easily checked as a consequence of the following general lemma (in.

\begin{lemma}
\label{lem:coercivity}
Let $M$ be the boundary of a Lipschitz-domain in $\R^d$, $s_{2}\in[0,1]$, and  
$V$ a linear subspace of $\sH^{s_{2}}(M)$, closed for the weak convergence in  $\sH^{s_{2}}(M)$. If $\ell$,  a quadratic form defined on $\sH^{s_{2}}(M)$ satisfies condition {\bf(C}$\!\!~_{\sH^{s_{2}}}${\bf)} (see page \pageref{page:Chs}), 
then the following propositions are equivalent:
\begin{center}
\begin{minipage}{0.7\textwidth}
\begin{description}
\item[(i)] $\ell(\varphi,\varphi)>0$ for any $\varphi \in V\setminus\{0\}$.
\item[(ii)] $\exists \gamma>0,\;\;\; \ell(\varphi,\varphi) \geq \gamma\|\varphi\|_{\sH^{s_{2}}(M)}^2$ for any $\varphi \in V$.
\end{description}
\end{minipage}
\end{center}
\end{lemma}
\begin{remark}
In practice, we apply this lemma 
 when $V$ is either $\sH^{s_{2}}(\partial\Om)$ or $T(\partial\Om)$ defined in \eqref{eq:T}.
\end{remark}
\begin{proof}
The implication 
{\bf(ii)} $\implies$ {\bf (i)} is trivial. 
Assume {\bf (i)} and let $(\varphi_k)_{k}$ a minimizing sequence for the problem 
$$\inf \left\{ \ell(\varphi,\varphi),\ \varphi \in V, \|\varphi\|_{\sH^{s_{2}}}=1 \right\}.$$
Up to a subsequence, $\varphi_k$ weakly converges in  $\sH^{s_{2}}(M)$ to some $\varphi_{\infty} \in V$. By the compactness of the embedding of $\sH^{s_{2}}(M)$ into $\sH^{s_{1}}(M)$, $\varphi_k \rightarrow \varphi_{\infty}$ in $\sH^{s_{1}}(M)$  so that $\ell_{r}(\varphi_k,\varphi_k) \rightarrow \ell_{r}(\varphi_\infty,\varphi_\infty)$.
We distinguish two cases: if $\varphi_{\infty}\ne 0$, $\liminf_{k} \ell_{m}(\varphi_{k},\varphi_{k}) \geq \ell_{m} (\varphi_{\infty},\varphi_{\infty})$ by the lower semi continuity of $\ell_{m}$, so that
$\liminf_{k} \ell(\varphi_{k},\varphi_{k})\geq \ell(\varphi_{\infty},\varphi_{\infty})>0$ by assumption {\bf(i)}.
Now, if $\varphi_{\infty}=0$, then  as the norm $\|\cdot\|_{\sH^{s_{2}}}$ is equivalent to the norm $\|\cdot\|_{\sH^{s_{1}}}+|\cdot|_{\sH^{s_{2}}}$, we know that $|\varphi_{k}|_{\sH^{s_{2}}}$ is bounded from below by a positive constant, and using {\bf(C}$\!\!~_{\sH^{s_{2}}}${\bf)}, 
$$\liminf_{k} \ell(\varphi_{k},\varphi_{k})=\liminf_{k} \ell_{m}(\varphi_{k},\varphi_{k}) \geq c_{1} \liminf_{k} |\varphi_{k}|_{\sH^{s_{2}}}^2 >0.$$ 

\end{proof}
\begin{remark} 
The equivalence between coercivity in $\sL^2$ and $\sH^1$ was already known in the context of stable minimal surface it appears in the work \cite{Grosse-Brauckmann} of Grosse-Brauckmann. In \cite{AFM}, the previous lemma is proven in the particular case of the functional under study (see also Section \ref{ssect:literature}).
\end{remark}

\begin{remark}
When one applies this lemma to a shape hessian, assumption (i) may seem unnatural. Indeed, shape derivatives are usually defined for regular perturbations that are dense subsets of $\sH^{s}(\partial\Omega)$ and one could expect to assume only $\ell(\varphi,\varphi)>0$ for $\varphi \in V\setminus\{0\}$ smooth enough. But this assumption may not be sufficient: indeed the function $\varphi_{\infty}$ in the proof above may not be smooth and therefore not admissible to test the positivity property. Therefore, the shape hessian $\ell$ has to be first extended by continuity to the whole $\sH^{s}(\partial\Omega)$ \textcolor{black}{(see assumption \eqref{eq:P''} in Theorem \ref{th:main} and \eqref{eq:stable} for Theorem \ref{th:main:contrainte}), see Lemma \ref{lemme:expression:derivees} for such an extension in classical examples. However in some cases, we may expect regularity for $\varphi_{\infty}$, see for example \cite[Remark 1]{DePM}.}
\end{remark}

We conclude  this section noticing that the shape hessians of the model functionals from Section \ref{sect:derivative} satisfies {\bf(C}$\!\!~_{\sH^{s_{2}}}${\bf)}: 
\begin{itemize}
\item The perimeter satisfies {\bf(C}$\!\!~_{\sH^{1}}${\bf)} with 
$$ \ell_{m}[P](\Omega)(\varphi,\varphi)=\int_{\partial\Omega} |\nablat \varphi|^2 \;\;\;\;\text{ and }\;\;\;\; \ell_{r}[P](\Omega)(\varphi,\varphi)=\int_{\partial\Omega} \left[H^2-\|\B\|^2
\right] \varphi^2 \;\;\;\;\;\textrm{ (here we can choose }s_{1}=0).$$
\item The Dirichlet energy and $\lambda_{1}$ satisfy {\bf(C}$\!\!~_{\sH^{1/2}}${\bf)} (again $s_{1}=0$):
\begin{eqnarray*} &&\ell_{m}[E](\Omega)(\varphi,\varphi)= \langle \partial_{n} u \varphi,\Lambda(\partial_{n}u \varphi) \rangle_{\sH^{1/2}\times\sH^{-1/2}} \;\;\;\;\text{ and }\;\;\;\;\ell_{r}[E](\Omega)(\varphi,\varphi)=\int_{\partial\Omega}\left[\partial_{n} u+ \frac{1}{2} H (\partial_{n}u)^2\right] \varphi^2,\\
&&\ell_{m}[\lambda_{1}](\Omega).(\varphi,\varphi)= \int_{\partial\Omega}  2 w(\varphi)\ \partial_{n} w(\varphi)\;\;\;\; \text{ and }\;\;\;\;\ell_{r}[\lambda_{1}](\Omega)(\varphi,\varphi)= \int_{\partial\Omega} H (\partial_{n}v)^2 \varphi^2.
\end{eqnarray*}
\end{itemize}
 
\begin{remark}
Let us emphasize that condition {\bf(C}$\!\!~_{\sH^{s}}${\bf)} may not be valid in some interesting examples.   
 Shape functionals used for domain reconstruction from boundary measurements provide in general non-coercive Hessians. With the examples treated in  \cite{AfraitesDambrineKateb}, \cite{BadraCaubetDambrine} one can find critical shape whose hessian is positive but is not coercive (for any $\sH^s$-norm). 

More precisely, for a reconstruction function $J$ related to this kind of inverse problem (for example the least square fitting to data), the Riesz operator corresponding to the shape Hessian $\ell_{2}[J]$ at a critical domain is compact. This means, that one cannot expect an estimate of the kind $J(\Omega_{t})-J(\Om_{0})\geq c t^2$ with a constant $c$ uniform in the deformation direction. This explains also why regularization is required in the numerical treatment of this type of problem. This fact is well-known in the inverse problem community. 

There are also situations where the objective is flat up to fourth order (see \cite{DambrineKateb}). 
\end{remark}
\subsection{Proof of Theorem \ref{th:main}}\label{ssect:proofcontrainte}

Let $\Omega^*$ be a domain satisfying the assumption of Theorem \ref{th:main}. Let $\eta>0$ and let $\Omega=\Om^*_{h}$ with $\|h\|_{X}<\eta$. 
Then from {\bf(IT}$\!\!~_{\sH^{s},X}${\bf)} we have
\begin{equation*}
J(\Omega)-J(\Omega^*)=\underbrace{\ell_{1}[J](\Om^*)(h)}_{=0}+\frac12\ell_{2}[J](\Om^*)(h,h)+\omega(\|h\|_{X})\|h\|^2_{\sH^s}
\end{equation*}			
Using {\bf(C}$\!\!~_{\sH^{s_{2}}}${\bf)}, we can apply Lemma \ref{lem:coercivity} and there is a constant $\gamma>0$  such that
\begin{equation*}
\ell_{2}[J](\Om^*).(h,h)\geq \gamma \|h\|_{\sH^{s_{2}}}^2.
\end{equation*}
Therefore there exists $\eta$ small enough such that if $\|h\|_{X}\leq \eta$, then $\omega(\|h\|_{X})\leq \frac\gamma4$ and then
$$J(\Omega)-J(\Omega^*)\geq \cfrac{\gamma}{4}\|h\|_{\sH^{s_{2}}}^2.$$\qed

\subsection{Proof of Theorem \ref{th:main:contrainte}}

We denote $\mu$ the Lagrange multiplier associated to $J$. Therefore we consider $J_{\mu}=J-\mu\Vol$ and $\Om^*$ satisfies $J_{\mu}'(\Om^*)=0$.\\

\noindent{\bf Step 1: Stability under volume and barycenter constraint:} 
Under the structural hypotheses on $\ell_{2}[J](\Om^*)=\ell_{m}+\ell_{r}$ and the fact that $\ell_{2}[\Vol](\Om^*)$ is continuous in the $\sL^2$-norm,  we can applied Lemma \ref{lem:coercivity} to $\ell_{2}[J_{\mu}](\Om^*)$, so there are constants $c_{1},c_{2},c_{3}$ and $c_4>0$ such that
\begin{equation}\label{eq:continuity}
\forall \varphi\in  \sH^{s_{2}}(\partial\Om^*), \ \  |\ell_{m}(\varphi,\varphi)|\geq c_{1} |\varphi|_{\sH^{s_{1}}}^2\ \ \ \ |{ \ell_{r}}(\varphi,\varphi)|\leq c_{2}\|\varphi\|_{\sH^{s_{1}}}^2, \ \ \ \ |{  \ell_{2}}[\Vol](\Om^*).(\varphi,\varphi)|\leq c_{3}\|\varphi\|_{\sL^2}^2,
\end{equation}
\begin{equation}\label{eq:positivity}
\forall \varphi\in T(\partial\Om^*)
, \;\;\;\ell_{2}[J-\mu \Vol](\Om^*).(\varphi,\varphi)\geq {  c_{4}}\|\varphi\|_{\sH^{s_{2}}}^2.
\end{equation}

\noindent{\bf Step 2: Stability without constraint:} 
We consider 
$$J_{\mu,C}=J-\mu \Vol+C\left(\Vol-V_{0}\right)^2+C\left\|\Bar-\Bar(\Om^*)\right\|^2,$$
where $\Bar(\Om):=\int_{\Om}x$ and $\|\cdot\|$ is the euclidean norm in $\R^d$.
The shape $\Om^*$ still satisfies $J_{\mu,C}'(\Om^*)=0$. We claim that $\Om^*$ is a 
strictly stable shape for $J_{\mu,C}$ on the entire space $\sH^{s_{2}}(\partial\Om^*)$ when $C$ is big enough, that is to say
for all $\varphi$ in $\sH^{s_{2}}(\partial\Om^*)\setminus\{0\}$,
\begin{equation}\label{eq:positiveJC}
\ell_{2}[J_{\mu,C}](\Om^*).(\varphi,\varphi)>0.
\end{equation}
Indeed, if it was not the case, we would have the existence of $\varphi_{n}\in \sH^{s_{2}}(\partial\Om^*)\setminus\{0\}$ such that
\begin{equation}\label{eq:pos}
\ell_{2}[J_{\mu,n}](\Om^*).(\varphi_{n},\varphi_{n})\leq 0.
\end{equation}
Using \eqref{eq:continuity}, this leads to
\begin{equation}\label{eq:borne}
c_{1}|\varphi_{n}|_{\sH^{s_{2}}}^2-c_{2}\|\varphi_{n}\|_{\sH^{s_{1}}}^2-|\mu|c_{3}\|\varphi_{n}\|^2_{\sL^2}+2n\left(\int_{\partial\Om^*}\varphi_{n}\right)^2+2n\left\|\int_{\partial\Om^*}\varphi_{n} x\right\|^2\leq 0.
\end{equation}
Assuming by homogeneity that $\|\varphi_{n}\|_{\sH^{ s_{1}}}=1$ for every $n$, \eqref{eq:borne} implies that $(\varphi_{n})_{n}$ is bounded in $\sH^{s_{2}}$ and using the compactness of $\sH^{s_{2}}(\partial\Om^*)$ in $\sH^{s_{1}}(\partial\Om^*)$, we have up to a subsequence that $\varphi_{n}$  converges to $\varphi$ weakly  in $\sH^{s_{2}}$ and strongly in $\sH^{s_{1}}$. Therefore \eqref{eq:borne} implies first that $2n[\Vol'(\varphi_{n})^2+\Bar'(\varphi_{n})^2]$ is bounded, then 
 that $\varphi\in T(\partial\Omega^*)$ 
and then the semi-lower continuity assumption in {\bf(C}$\!\!~_{\sH^{s_{2}}}${\bf)} implies
$$\ell_{2}[J_{\mu}](\Om^*).(\varphi,\varphi)\leq 0,\;\;\;\textrm{ with }\|\varphi\|_{\sH^{s_{1}}}=1$$
which contradicts \eqref{eq:positivity}.\\

\noindent{\bf Step 3: Stability:} 
 It is now easy to see that $J_{\mu,C}$ satisfies both {\bf(C}$\!\!~_{\sH^{s_{2}}}${\bf)} and {\bf(IT}$\!\!~_{\sH^{s_{2}},X}${\bf)} at $\Om^*$ (using that $\Vol$ and $\Bar$ satisfy {\bf(IT}$\!\!~_{\sH^{0},\sW^{1,\infty}}${\bf)}, see Section \ref{sssect:geometric}), and for $C$ large enough we have \eqref{eq:positiveJC}, so applying Theorem \ref{th:main}, there exists $c>0$ and $\eta>0$ such that for every $\Omega=\Om_{h}$ with $ \|h\|_{X}<\eta$,  
\begin{equation*}
J_{\mu,C}(\Om)-J_{\mu,C}(\Om^*)\geq c \|h\|_{\sH^{s_{2}}}^2,
\end{equation*}
We then write this inequality in particular for shapes $\Om$ having the same volume and barycenter as $\Om^*$, 
and conclude the proof using the invariance of $J$ with translations.
\qed

\section{About Condition  {\bf(IT}$\!\!~_{\sH^{s},X}${\bf)}}
\label{section:Chs}

In this section, we show that our main examples satisfy condition {\bf(IT}$\!\!~_{\sH^{s},X}${\bf)} where $s$ is given in Section \ref{ssect:coercivity}, and $X$ is hoped to be as large as possible. Let us start with the notations we will use in this section.

Given $\Om$ an open set 
 and $h:\partial\Om\to \R$, we recall that $\Om_{h}$ is defined so that
$$\partial\Om_{h}=\{x+h(x)\n(x),x\in\partial\Om\}.$$

It will be useful to see $\Om_{h}$ as a deformation with a vector field. To that end, we assume $\Om$ of class $\sC^2$ so that the projection $\pi_{\partial\Om}$ on $\partial\Om$ is well-defined and $\sC^1$ in a neighborhood of $\partial\Om$, and we define
 $$h(x)=h(\pi_{\partial\Om}(x))\;\;\;\;\;\;\textrm{ and }\;\;\;\;\;\;\;\n(x)=\n(\pi_{\partial\Om}(x)),$$
in order to extend $h$ and $\n$ in a neighborhood of $\partial\Om$, and then we define $\boldsymbol{\xi}_{h} (x)= h(x)\n(x)$ in this neighborhood. With this construction, $\boldsymbol{\xi}_{h}$ is constant in the normal direction, so
$\div{\boldsymbol{\xi}_{h}}= \div(\n) h$. We can then extend it smoothly to $\R^d$, so that $\boldsymbol{\xi}_{h}\in\sW^{1,\infty}(\R^d,\R^d)$. 
Denoting $T_{h}=\Id+\boldsymbol{\xi}_{h}$, we have $\Om_{h}=T_{h}(\Om)$, and $\j_{\Om}(h)=\mathcal{J}_{\Om}({\boldsymbol{\xi}_{h}})=J(\Om_{h})$ (where $J$ is the shape functional under study). 
In this section, the notation $\widehat{w_{h}}$ stands for $w_{h}\circ T_{h}$ where $w_{h}$ is defined on $\Om_{h}$ or $\partial\Om_{h}$.

When studying condition {\bf(IC}$\!\!~_{\sH^{s},X}${\bf)} (which implies {\bf(IT}$\!\!~_{\sH^{s},X}${\bf)}), we focus on the path $\Om_{t}$ defined in \eqref{definition:chemin:dans:formes}, and we have $\Om_{t}=(\Id+t\boldsymbol{\xi}_{h})(\Om)$  and 
$j''(t)=\mathcal{J}_{\Om}''(t{\boldsymbol{\xi}_{h}}).(\boldsymbol{\xi}_{h},\boldsymbol{\xi}_{h})$ for all $t\in[0,1]$, where $j(t)=J(\Om_{t})$. Note that in this case we will notify the dependence of quantities with respect to $t$, but there is also a dependence in $h$ that we will not recall in order to simplify the notations : for example $\n_{h}$ will denote the exterior normal vector to $\Om_{h}$ and $\n_{t}$ the normal vector to $\Om_{t}$ while we should use $\n_{th}$.
Also, as we chose a vector field that is constant along the normal vector in a neighborhood of $\partial\Om$, we have (if $\|h\|_{\infty}$ is small enough)
\begin{equation}\label{eq:j''}
j''(t)=J''(\Om_{t})\cdot (\boldsymbol{\xi}_{h},\boldsymbol{\xi}_{h})=\mathcal{J}_{\Om_{t}}''(0)(\boldsymbol{\xi}_{h},\boldsymbol{\xi}_{h}).
\end{equation}

\subsection{Geometric quantities}\label{sssect:geometric}

\noindent{\bf $\bullet$ The volume:}

\begin{proposition}\label{prop:volume}
If $\Om$ is $\sC^2$, then $\Vol$ satisfies {\bf(IC}$\!\!~_{\sL^{2},\sW^{1,\infty}}${\bf)} at $\Om$.
\end{proposition}
\begin{remark}\label{rk:barycenter}
More generally (with a similar proof), we have that $\Om\mapsto \int_{\Om} f$ also satisfies {\bf(IC}$\!\!~_{\sL^{2},\sW^{1,\infty}}${\bf)} if $f\in C^1(\R^d)$. This is true in particular for the barycenter functional. 
\end{remark}

Before proving this result, we give a geometric Lemma, inspired by the results in \cite{Dambrine}. We recall that $J_{\partial\Om}(h):=\det{D T_{h}}|({}^t D T_{h}^{-1}){  \n} |$ is the surface jacobian, appearing when changing variables between $\partial\Om_{h}$ and $\partial\Om$. 
\begin{lemma}
\label{estimations:geometriques1}
We have the following Taylor expensions, where ${\mathcal{O} }$ denote a domination uniform in $x\in\partial\Om$,
\begin{itemize}
\item $J_{\partial\Om}(h)(x)=1+\ell_{1}^J(h(x),\nabla h(x))+\frac{1}{2}\ell_{2}^J(h(x),\nabla h(x))+\mathcal{O}\left(\|h\|_{\sW^{1,\infty}(\partial\Om)} \left(|h(x)|^2+|\nabla h(x)|^2\right)\right),$
\item $\widehat{\n}_{h}(x)=\n(x)+\ell_{1}^{\n}(h(x),\nabla h(x))+\frac{1}{2}\ell_{2}^{\n}(h(x),\nabla h(x))+\mathcal{O}\left(\|h\|_{\sW^{1,\infty}(\partial\Om)} \left(|h(x)|^2+|\nabla h(x)|^2\right)\right).$
\end{itemize}
where $(\ell_{1}^J,(\ell_{1}^{\n})_{i\in\llbracket1,d\rrbracket})$, $(\ell_{2}^J,(\ell_{2}^{\n})_{i\in\llbracket1,d\rrbracket})$ are respectively linear and quadratic forms on $\R^{d+1}$.
\end{lemma} 

\noindent{\bf Proof of Lemma \ref{estimations:geometriques1}:}
The first part follows simply from the fact that  $A\in M_{d}(\R)\mapsto\det(A)\left|({}^t A^{-1}){  \n} \right|$ is smooth in a neighborhood of $\Id$, and the fact that $D\boldsymbol{\xi}_{h}=h(D\n)+\nabla h\otimes\n$.\\
For the second part, we use a level-set parametrization: there exists $\phi$ of class $\sC^2$ such that $\Om=\{\phi<0\}$ and $\nabla\phi$ does not vanish in a neighborhood of $\partial\Om$, and then $\Om=\{\phi\circ T_{h}^{-1}<0\}$. Therefore
\begin{equation}
\label{variations:normale}
\widehat{\n}_{h}-\n=\frac{\nabla(\phi\circ T_{h}^{-1})}{|\nabla(\phi\circ T_{h}^{-1})|}\circ T_{h}-\frac{\nabla\phi}{|\nabla\phi|}=\frac{{}^t DT_{h}^{-1}.\nabla\phi}{|{}^t DT_{h}^{-1}.\nabla\phi|}-\frac{\nabla\phi}{|\nabla\phi|},
\end{equation}
and we conclude using the smoothness of $A\mapsto {}^t A^{-1}$ and $w\in\R^d\mapsto \frac{w}{|w|}$ in the neighborhood of $\Id$ and $\nabla\phi$ respectively.\qed

~\\
\noindent{\bf Proof of Proposition \ref{prop:volume}:}
We use \eqref{eq:volume''}, \eqref{eq:j''}, and the fact that $\div(\boldsymbol{\xi}_{h})=h\div(\n)$ (as $h$ is constant in the direction of $\n$). Therefore if $v(t)=\Vol(\Om_{t})$, we have:
$$v''(t)=\int_{\partial\Om_{t}}\boldsymbol{\xi}_{h}\cdot\n_{t}\div(\boldsymbol{\xi}_{h})=\int_{\partial\Om_{t}}\div(\n)(\n\cdot\n_{t}) h^2=\int_{\partial\Om}H (\n\cdot\widehat{\n}_{t}) h^2 J_{\partial\Om}(t).$$
 With Lemma \ref{estimations:geometriques1}, we easily obtain
$$|v''(t)-v''(0)|\leq Ct\|h\|_{\sW^{1,\infty}}\|h\|_{\sL^2}^2\leq C\|h\|_{\sW^{1,\infty}}\|h\|_{\sL^2}^2.$$
\qed

\begin{remark}\label{rk:volumebadproof}
We could try a direct proof estimating
$$|\Om_{h}|-|\Om|=\int_{\Om}\left(\det(\Id+D\boldsymbol{\xi}_{h})-1\right),$$
but a priori this only leads to the fact that the volume satisfies  {\bf(IT}$\!\!~_{\sH^{1},\sW^{1,\infty}}${\bf)}. 
In the spirit of \cite[Lemma 4.1]{Neumayer}, we could also try:
$$|\Om|=\frac{1}{d}\int_{\partial\Om}x\cdot\n_{h}=\frac{1}{d}\int_{\partial\Om}(x+h(x)\n(x))\cdot\widehat{\n}_{h} J_{\partial\Om}(h)
$$
but this leads to the same issue (see also Remark \ref{rk:comparison}).
\end{remark}

\noindent{\bf $\bullet$  The perimeter:}

\begin{proposition}
If $\Om$ is $\sC^2$, then $P$ satisfies {\bf(IT}$\!\!~_{\sH^{1},\sW^{1,\infty}}${\bf)} condition at $\Om$.
\end{proposition}

\begin{proof}
We follow exactly the second proof suggested in Remark \ref{rk:volumebadproof} and use Lemma \ref{estimations:geometriques1}:
\begin{equation*}\label{eq:taylorp}
P(\Om_{h})=\int_{\partial\Om_{h}}1=\int_{\partial\Om}J_{\partial\Om}(h)=
P(\Om)+\ell_{1}[P](\Om)(h)+\frac{1}{2}\ell_{2}[P](\Om)(h,h)+{\mathcal{O}(\|h\|_{\sW^{1,\infty}} }\|h\|_{\sH^1}^2).
\end{equation*} 
\end{proof}
\begin{remark}\label{rk:comparison}
It is interesting to compare the two strategies used for the volume and for the perimeter: indeed, for the volume we prefered to use condition {\bf(IC)}, while a similar strategy for the perimeter, as it is done in \cite{Dambrine} or in \cite[Proof of Theorem 3.9]{AFM} (but for a different path of shapes) lead to weaker results, namely {\bf(IC}$\!\!~_{\sH^{1},\sC^{2,\alpha}}${\bf)} and {\bf(IC}$\!\!~_{\sH^{1},\sW^{2,p}}${\bf)} respectively). 
\end{remark}

\subsection{PDE energies}

For PDE energies, a condition of the type {\bf(IC}$\!\!~_{\sH^{s},X}${\bf)} was studied first in \cite{DambrinePierre} where it is proven that in dimension two the Dirichlet energy satisfy {\bf(IC}$\!\!~_{\sH^{1/2},\sC^{2,\alpha}}${\bf)} (for a volume preserving path instead of a normal path), then a similar result is proven for general PDE functionals in any dimension in \cite{Dambrine}, either for the path \eqref{definition:chemin:dans:formes} or a volume preserving path.
More recently in \cite{AFM}, it was proven that the functional described in \eqref{eq:AFMfunction} involving the sum of the perimeter and a PDE functional (of a different kind than in \cite{Dambrine}) satisfies {\bf(IC}$\!\!~_{\sH^{1},\sW^{2,p}}${\bf)} for $p$ large enough, also for a volume preserving path, see also Section \ref{ssect:literature}. Finally, condition  {\bf(IC}$\!\!~_{\sH^{1/2},\sC^{2,\alpha}}${\bf)} is also established for the drag in a Stokes flow in \cite{CaubetDambrine}. 
 Thanks to our method to handle the volume constraint (see Section \ref{ssect:proofcontrainte}), we only need to deal with the normal path \eqref{definition:chemin:dans:formes}.

In this section, we prove Theorem \ref{th:CHX}, which includes the case of $\lambda_{1}$ (which seemed not to be handled in the literature), and we improve the result from \cite{Dambrine} by proving {\bf(IC}$\!\!~_{\sH^{s},X}${\bf)} with a smaller space $X$.
We give 4 preliminary steps to prove this result.  We only give the details for $\lambda_{1}$, as the case of $E$ is easier and the reader can follow \cite{Dambrine} or \cite[Appendix]{BdPV} and use the ideas below where we explain how to get $X$ to be $\sW^{2,p}$ instead of $\sC^{2,\alpha}$. We assume $\Om$ to be $\sC^3$.\\[-3mm]

\noindent{ { $\bullet$ Step 1: }\noindent{\bf{ Computing the second derivative along the path.}} 

Denoting $v_{t}$ the first normalized eigenfunction on $\Om_{t}$ and applying the structure Theorem to $\lambda_{1}$ (Lemma \ref{lemme:expression:derivees}) and \eqref{eq:j''}, we get
\begin{align}
\lambda_{1}''(\Om_{t}).(\boldsymbol{\xi}_{h},\boldsymbol{\xi}_{h})&= 2
\int_{\partial\Om_{t}}v_{t}'\partial_{\n_{t}}v_{t}'+\int_{\partial\Om_{t}}(\partial_{\n_{t}}v_{t})^2 \left[ H_{t}(\boldsymbol{\xi}_{h}\cdot \n_{t})^2-\B_{t}(({\boldsymbol{\xi}_{h})}_{\tau_{t}},{(\boldsymbol{\xi}_{h})}_{\tau_{t}})+2\nabla_{\tau_{t}}(\boldsymbol{\xi}_{h}\cdot \n_{t}){(\boldsymbol{\xi}_{h})}_{\tau_{t}}\right]\notag\\
&= 2
\underbrace{\int_{\partial\Om_{t}}v_{t}'\partial_{\n_{t}}v_{t}'}_{\mathcal{T}_{1}(t)}+\underbrace{\int_{\partial\Om_{t}}(\partial_{\n_{t}}v_{t})^2 \left[ H_{t}\alpha_{t}^2-\B_{t}(\beta_{t},\beta_{t})-2\nabla_{\tau_{t}}(\alpha_{t})\cdot\beta_{t}\right]h^2}_{\mathcal{T}_{2}(t)}\label{decompostion:derivee:seconde:directionnelle:a:t}\\
&\hspace{3cm}-2\underbrace{\int_{\partial\Om_{t}}(\partial_{\n_{t}}v_{t})^2 \alpha_{t}\left(\beta_{t}\cdot\nabla_{\tau_{t}}h\right)h}_{\mathcal{T}_{3}(t)}\;\;\;\;\;\textrm{ where }\alpha_{t}=\n_{t}\cdot \n, \;\;\;\;\;\;\;\beta_{t}=\alpha_{t}\n_{t}-\n.\notag
\end{align}

\noindent{ { $\bullet$ Step 2: }\noindent{\bf Geometric estimates:}} 

Similarly to Section \ref{sssect:geometric}, we denote $\widehat{w}_{h}=w_{h}\circ (\Id+\boldsymbol{\xi}_{h})$ where $w_{h}$ is defined on $\Om_{h}$ of $\partial\Om_{h}$. 
The following Lemma follows easily from Lemma \ref{estimations:geometriques1} (see \cite{Dambrine} for more details).
\begin{lemma}
\label{estimations:geometriques}
There is a constant $C$ depending on $\Omega$ such that for all $h$ in a neighborhood of 0 in $\sW^{2,p}(\partial\Om)$,
\begin{itemize}
\item $\|\widehat{J_{\partial\Om}(h)}-1\|_{\sL^\infty(\partial\Om)} \ \leq \ C \|h\|_{\sW^{1,\infty}(\partial\Om)},$ $\;\;\;\;\|\widehat{J_{\partial\Om}(h)}-1\|_{\sW^{1,p}(\partial\Om)} \ \leq \ C \|h\|_{\sW^{2,p}(\partial\Om)},$
\item $\|\widehat{H_{h}}-H\|_{\sL^p(\partial\Om)}\leq \ C \|h\|_{\sW^{2,p}(\partial\Om)},$ $\;\;\;\;\|\widehat{\B_{h}}-\B\|_{\sL^p(\partial\Om)}\leq \ C \|h\|_{\sW^{2,p}(\partial\Om)},$
\item $\|\widehat{\alpha_{h}}-1\|_{\sL^\infty(\partial\Om)}\leq C\|h\|_{\sW^{1,\infty}(\partial\Om)}$, \;\;\;\;$\|\widehat{\nabla_{\tau_{h}}\alpha_{h}}\|_{\sL^p(\partial\Om)}\leq C\|h\|_{\sW^{2,p}(\partial\Om)}$,
\item $\|\widehat{\beta_{h}}\|_{\sL^\infty(\partial\Om)}\leq C\|h\|_{\sW^{1,\infty}(\partial\Om)}$, $\;\;\;\;\|\widehat{\beta_{h}}\|_{\sW^{1,p}(\partial\Om)}\leq C\|h\|_{\sW^{2,p}(\partial\Om)}$.
\end{itemize}
\end{lemma} 

\noindent{ { $\bullet$ Step 3: }\noindent{\bf{Estimate of $\|\widehat{v_{\theta}}-v\|_{\sW^{2,p}}$}:}
This step is not specific to our chosen deformations $\boldsymbol{\xi}_{h}$ hence we present it for general deformations $\theta\in \sW^{1,\infty}(\R^d,\R^d)$, that is $v_{\theta}$ is the first Dirichlet eigenfunction on $(\Id+\theta)(\Om)$.

\begin{lemma}\label{regularite:vp:fp}
If $p>d$, the map $\theta\mapsto \widehat{v}_{\theta}$ from $\sW^{2,p}(\R^d,\R^d)$ with values in $\sW^{2,p}(\Omega)$ is $\sC^\infty$ around $0$. As a consequence, there is a neighborhood of $0$ in $\sW^{2,p}(\R^d,\R^d)$ and $C$ depending on $\Om$ only so that 
$$\| \widehat{v_{\theta}} -v_{0}\|_{\sW^{2,p}(\Om)} \leq C \| \theta-\Id \|_{\sW^{2,p}} .$$
\end{lemma}

\begin{proof}
We use the same strategy as  in \cite[Proof of Theorem 5.7.4]{HenrotPierre} and \cite{Henry} but with different functional spaces: precisely, we will apply the implicit function theorem to $\mathcal{F}: ~X\times Y  \times \R \to Z \times \R $ defined by
\begin{equation}\label{eq:implicit}
\mathcal{F}(\theta,v,\lambda)=\left( -\div{A(\theta)\nabla v} - \lambda J(\theta)v, \int_{\Omega} v^2J(\theta) -1\right)\;\;\;\;\;\textrm{where }\;\;\;\left\{\begin{array}{l}J(\theta)=\det(\Id+D\theta),\\[3mm]A(\theta)=J(\theta)(\Id+D\theta)^{-1}(\Id+{}^tD\theta)^{-1},\end{array}\right.
\end{equation}
for suitable spaces $X,Y,Z$. 
Using that $\sW^{1,p}$ is an algebra for $p>d$, we easily obtain that the maps $J$ and $A$ are $\sC^\infty$ around $0$ from $\sW^{2,p}(\R^d,\R^d)$ into $\sW^{1,p}(\R^d,\R^d)$. 
As a consequence, by Sobolev's embedding, the map $\mathcal{F}$ is $\sC^\infty$ around $(0,v_{0},\lambda_{0}:=\lambda_{1}(\Om))$ from $\sW^{2,p}(\R^d,\R^d) \times \sW^{2,p}(\Omega)\cap H^1_{0}(\Om)\times\R  $ into $\sL^{p} (\Omega)\times \R$. Besides 
$\mathcal{F}(0,v_{0},\lambda_{0})=(0,0)$ and the differential
$$\partial_{v,\lambda}\mathcal{F}(0,v_{0},\lambda_{0}).[w,\lambda]= \left( (-\Delta-\lambda_{0}) w-\lambda v_{0},2 \int_{\Omega} v_{0} w\right) $$ 
is an isomorphism from $\sW^{2,p} (\Omega)\cap H^1_{0}(\Om)\times \R$ into $\sL^{p} (\Omega)\times \R$ (see \cite[Lemma 5.7.3]{HenrotPierre} for details) and the conclusion follows.
\end{proof}

{\bf {$\bullet$ Step 4: }}\noindent{\bf estimation of the variation of the shape derivative of the eigenfunction:} 

The objective of this step is to prove the following estimate:
\begin{lemma}
\label{lemme:variation:derivee:fonction:propre}
There is $C,\eta$ depending only on $\Omega$ such that, if $\|h\|_{\sW^{2,p}(\partial\Om)}\leq \eta$, then
\begin{equation}
\label{eq::variation:derivee:fonction:propre}
\|\widehat{v'_{t}}-v'_{0}\|_{\sH^{1}(\Omega)}\leq C  \|h\|_{\sH^{1/2}(\partial\Omega)} \  \|h\|_{\sW^{2,p}(\partial\Om) }.
\end{equation}
\end{lemma}

This step is the most involved one when dealing with $\lambda_{1}$ instead of the Dirichlet Energy: the latter reduces in fact to the second step in the following proof. 

\begin{proof} 

We recall (see \eqref{definition:w}) that
\begin{equation}
\label{eq:v'}
\left\{
\begin{array}{rcl}
 -\Delta v_{t}'&=&\lambda_{1}(t) v'_{t} +\lambda_{1}'(t)v_{t} \text{ in }\Omega_{t}, \\[10pt]
v'_{t}& =&-(\partial_{\n_{t}}v_{t})\boldsymbol{\xi}_{h}\cdot \n_{t}\text{ on }\partial\Omega_{t},\\[5pt]
\displaystyle\int_{\Omega} v_{t}' v_{t}&=&0.
\end{array}
\right.	
\end{equation}

\emph{1. Splitting.}
We introduce $\He_{t}$ the harmonic extension on $\Om_{t}$ of $(\partial_{\n_{t}}v_{t})\boldsymbol{\xi}_{h}\cdot \n_{t}$.
Noticing that
$$\lambda_{1}'(t)=-\int_{\partial\Om_{t}}(\partial_{\n_{t}}v_{t})^2\boldsymbol{\xi}_{h}\cdot \n_{t}=\lambda_{1}(t)\langle v_{t},\He_{t}\rangle$$
where $\langle\cdot,\cdot\rangle$ is the scalar product in $L^2(\Om_{t})$, we decompose 
$$v_{t}'=-\pi_{t}\He_{t}+w_{t}$$
 where $\pi_{t}$ is the orthogonal projection on $E(t):=\{v_{t}\}^\perp$, and $w_{t}$ solves
 \begin{equation}
\label{reformulation:w:en:u}
\left\{
\begin{array}{rcl}
 (-\Delta-\lambda_{1}(t)) w_{t}&=&-\lambda_{1}(t) \pi_{t}\He_{t} \text{ in }\Omega_{t}, \\[10pt]
w_{t}& =&0\text{ on }\partial\Omega_{t},\\[5pt]
\displaystyle\int_{\Omega_{t}} v_{t} w_{t}&=&0.
\end{array}
\right.	
\end{equation}

We will now prove that each term of the splitting satisfies estimates like \eqref{eq::variation:derivee:fonction:propre}.

\emph{2. Estimate of the harmonic extension.}
Let us define $\mathcal{L}_{t}=\div(A_{t}\nabla\cdot)$ where $A_{t}=J_{t}.(\Id+tD\boldsymbol{\xi}_{h})^{-1}(\Id+t{}^tD\boldsymbol{\xi}_{h})^{-1}$ and  $J_{t}=\det(\Id+tD\boldsymbol{\xi}_{h})$, so that $\mathcal{L}_{t}\widehat{f_{t}}=\widehat{\Delta f_{t}}$ if $f_{t}$ is defined on $\Om_{t}$. Then as $\Delta(\widehat{\He_{t}}-\He_{0})=-\div((A_{t}-\Id)\nabla \widehat{\He_{t}})$, from classical elliptic estimate (see \cite[Corollary 8.7 p 183]{GT01Ell}), we obtain:
\begin{align}
\|\widehat{\He_{t}}-\He_{0}\|_{\sH^1(\Om)}&\leq C\|(A_{t}-\Id)\nabla\widehat{\He_{t}}\|_{\sL^2(\Om)}+C\|\widehat{\He_{t}}-\He_{0}\|_{\sH^{1/2}(\partial\Om)}\notag\\
&\leq  C\|A_{t}-\Id\|_{\sL^\infty(\Om)}\|\nabla \widehat{\He_{t}}\|_{\sL^2(\Om)}+C\left\|\left((\widehat{\partial_{\n_{t}}v_{t}})\widehat{\alpha_{t}}-\partial_{\n }v_{0}\right)h\right\|_{\sH^{1/2}(\partial \Om)}\notag\\
&\leq C\|h\|_{\sW^{1,\infty}(\partial\Om)}\left(\|\nabla \widehat{\He_{t}}-\nabla \He_{0}\|_{\sL^2(\Om)}+\|\nabla \He_{0}\|_{\sL^2(\Om)}\right)\\
&\hspace{2cm}+C\|\widehat{(\partial_{\n_{t}}v_{t}})\widehat{\alpha_{t}}-\partial_{\n }v_{0}\|_{\sW^{1-1/p,p}(\partial\Om)}\|h\|_{\sH^{1/2}(\partial \Om)}\notag\\
&\leq C\|h\|_{\sW^{2,p}(\partial \Om)}\left(\|\widehat{\He_{t}}-\He_{0}\|_{\sH^1(\Om)}+\|h\|_{\sH^{1/2}(\partial\Om)}\right).\label{eq:estimate:harm}
\end{align}

Here we used that $\|\nabla \He_{0}\|_{\sL^2(\Om)}=\|\He_{0}\|_{\sH^{1/2}(\partial\Om)}\leq C\|h\|_{\sH^{1/2}(\partial\Om)}$, Lemmas \ref{estimations:geometriques} and \ref{regularite:vp:fp}, and the following estimate of a product norm in $\sH^{1/2}$:
\begin{equation}\label{eq:produits}
\|uv\|_{\sH^{1/2}(\partial\Om)}\leq C\|u\|_{\sW^{s,p}(\partial\Om)}\|v\|_{\sH^{1/2}(\partial\Om)}
\end{equation}
if $(d-1)/p<s\leq 1$.
 One can find this inequality in \cite[Theorem 2 p 177]{RS} for functions on $\R^{d-1}$, with the condition $(d-1)/p<s$; using smooth maps between $\partial\Om$ and $\R^{d-1}$ we obtain \eqref{eq:produits} if in addition $s\leq 1$. We apply it here to $s=1-1/p$ which is valid as $p>d$.
Equation \eqref{eq:estimate:harm} leads to the first estimate 
$$\|\widehat{\He_{t}}-\He_{0}\|_{\sH^1(\Om)}\leq C\|h\|_{\sW^{2,p}(\partial\Om)}\|h\|_{\sH^{1/2}(\partial\Om)}$$
as soon as $\|h\|_{\sW^{2,p}(\partial\Om)}\leq 1/(2C)$.

\emph{3. Estimates on the variation of $w_{t}$.} 
We look at the PDE satisfied by $\widehat{w_{t}}-w_{0}$:
\begin{equation*}
\left\{
\begin{array}{rcl}
 (-\Delta -\lambda_{1}(0))(\widehat{w_{t}}-w_{0})  &=&\Big[(-\Delta-\lambda_{1}(0))-(-\mathcal{L}_{t}-\lambda_{1}(t))\Big](\widehat{w_{t}})-\lambda_{1}(t)\widehat{\pi_{t}\He_{t}}+\lambda_{1}(0)\pi_{0}\He_{0} \text{ in }\Omega, \\[10pt]
\widehat{w_{t}}-w_{0}& =&0\text{ on }\partial\Omega,\\[5pt]
\end{array}
\right.
\end{equation*}
and we know that $(-\Delta-\lambda_{1}(0))$ is an isomorphism on $\{v_{0}\}^\perp$. Therefore
$$\|\widehat{w_{t}}-w_{0}-\gamma_{t}v_{0}\|_{\sH^1(\Om)}\leq C\|(A_{t}-\Id)\nabla\widehat{w_{t}}\|_{\sL^2(\Om)}+|\lambda_{1}(t)-\lambda_{1}(0)|\|\widehat{w_{t}}\|_{\sL^2(\Om)}+\|\lambda_{1}(t)\widehat{\pi_{t}\He_{t}}-\lambda_{1}(0)\pi_{0}\He_{0}\|_{\sL^2(\Om)}$$
where $\gamma_{t}$ is chosen so that $\widehat{w_{t}}-w_{0}-\gamma_{t}v_{0}\in \{v_{0}\}^\perp$. From there and using the previous step, we obtain
$$\|\widehat{w_{t}}-w_{0}-\gamma_{t}v_{0}\|_{\sH^1(\Om)}\leq C\|h\|_{\sW^{2,p}(\partial\Om)}\left(\|\widehat{w_{t}}\|_{\sH^1(\Om)}+\|h\|_{\sH^{1/2}(\partial\Om)}\right).$$
But we have:
$$|\gamma_{t}|=\left|\int_{\Om}(\widehat{w_{t}}-w_{0})v_{0}\right|=\left|\int_{\Om}\widehat{w_{t}}\big[\widehat{v_{t}}J_{t}-v_{0}\big]\right|\leq C\|h\|_{\sW^{2,p}(\partial\Om)}\|\widehat{w_{t}}\|_{\sL^2(\Om)},$$
leading to
$$\|\widehat{w_{t}}-w_{0}\|_{\sH^1(\Om)}\leq C\|h\|_{\sW^{2,p}(\partial\Om)}\left(\|\widehat{w_{t}}\|_{\sH^1(\Om)}+\|h\|_{\sH^{1/2}(\partial\Om)}\right)\leq C\|h\|_{\sW^{2,p}(\partial\Om)}\left(\|\widehat{w_{t}}-w_{0}\|_{\sH^1(\Om)}+\|w_{0}\|_{\sH^1(\Om)}+\|h\|_{\sH^{1/2}(\partial\Om)}\right).$$
Using now 
$\|w_{0}\|_{\sH^1(\Om)}\leq C\|\He_{0}\|_{\sL^2(\Om)}\leq C\|h\|_{\sH^{1/2}(\partial\Om)}$ and again that $\|h\|_{\sW^{2,p}(\partial\Om)}$ is small enough, this leads to
$$\|\widehat{w_{t}}-w_{0}\|_{\sH^1(\Om)}\leq C\|h\|_{\sW^{2,p}(\partial\Om)}\|h\|_{\sH^{1/2}(\partial\Om)}$$
and concludes the proof of this lemma.
\end{proof}

\noindent{\bf Proof of Theorem \ref{th:CHX}:}
We deal separately with the terms of the decomposition \eqref{decompostion:derivee:seconde:directionnelle:a:t}: 

\noindent\emph{Estimate of $\mathcal{T}_{1}(t)-\mathcal{T}_{1}(0)$.} We first observe that
$$\mathcal{T}_{1}(t)=\int_{\Om_{t}}|\nabla v_{t}'|^2-\lambda_{1}(t)\int_{\Om_{t}}v_{t}^2,$$
and also that $\|v_{0}'\|_{\sH^1(\Om)}\leq \|w_{0}\|_{\sH^1(\Om)}+\|\He_{0}\|_{\sH^1(\Om)}\leq C\|h\|_{\sH^{1/2}(\partial\Om)}$.
Therefore using Lemma \ref{lemme:variation:derivee:fonction:propre}, we get
\begin{equation}
\left|\int_{\Omega_{t}} |\nabla v'_{t}|^{2} - \int_{\Omega_{0}} |\nabla v'_{0}|^{2} \right|=\left| \int_{\Omega} (A_{t}-\Id)|\nabla\widehat{v_{t}'}|^2+\nabla(\widehat{v_{t}'}-v'_{0})\cdot\nabla(\widehat{v_{t}'}+v_{0}')\right|\leq C \|h\|_{\sW^{2,p}(\partial\Omega)}\|h\|_{\sH^{1/2}(\partial\Omega)}^2
\end{equation}
and
\begin{eqnarray*}
\left|\lambda_{1}(t)\int_{\Omega_{t}} | v'_{t}|^{2} - \lambda_{1}(0) \int_{\Omega_{0}}|v'_{0}|^{2} \right|&=&\left|(\lambda_{1}(t)-\lambda_{1}(0)) \int_{\Omega_{t}}|v'_{t}|^{2}+\lambda_{1}(0)\int_{\Omega_{0}} (J_{t}-1) |\widehat{v_{t}'}|^{2} + (\widehat{v_{t}'} -v'_{0}) (\widehat{v_{t}'} +v'_{0})\right|\\
&\leq &C  \|h\|_{\sW^{2,p}(\partial\Omega)}  \|h\|_{\sH^{1/2}(\partial\Omega)}^2.
\end{eqnarray*}
\emph{Estimate of $\mathcal{T}_{2}(t)-\mathcal{T}_{2}(0)$.} After change of variable, we have
$\mathcal{T}_{2}(t)=\int_{\partial\Omega} \sigma_{t} h^2$
where
$$\sigma_{t}=(\widehat{\partial_{\n_{t}}v_{t}})^2 \left[ \widehat{H_{t}}\widehat{\alpha_{t}}^2-\widehat{\B_{t}}(\widehat{\beta_{t}},\widehat{\beta_{t}})-2\widehat{\nabla_{\tau_{t}}(\alpha_{t})}\cdot\widehat{\beta_{t}}\right]J_{\partial\Om}(t)$$
and from Lemmas \ref{estimations:geometriques} and \ref{regularite:vp:fp}, we easily get $\|\sigma_{t}-\sigma_{0}\|_{\sL^p(\partial\Om)} \leq C \|h\|_{\sW^{2,p}(\partial\Om)}.$
Notice that the control holds only in $\sL^p$ and not in $\sL^{\infty}$ as in  \cite{Dambrine} or \cite[Appendix]{BdPV}, hence we do not obtain a control with the $\sL^2$ norm of $h$. 
However, by H\"older inequality, it comes
$|\mathcal{T}_{2}(t)-\mathcal{T}_{2}(0)|\leq \|\sigma_{t}-\sigma_{0}\|_{\sL^p} \|h\|_{\sL^{\tilde p}}^2$ for any $\tilde p \geq 2p/(p-1)$. Since $\|h\|_{\sL^{\tilde p}}\leq C \|h\|_{\sH^{1/2}}$ when $\tilde p < 2d/(d-1)$ by Sobolev embeddings, such a $\tilde p$ can be chosen provided $p>d$. Then, it holds
 $$|\mathcal{T}_{2}(t)-\mathcal{T}_{2}(0)|\leq \|\sigma_{t}-\sigma_{0}\|_{\sL^p} \|h\|_{\sH^{1/2}}^2\leq   C\|h\|_{\sW^{2,p}} \|h\|_{\sH^{1/2}}^2 .$$
 \noindent \emph{Estimate of $\mathcal{T}_{3}(t)-\mathcal{T}_{3}(0)$.} After change of variable, we have
$\mathcal{T}_{3}(t)=\int_{\partial\Omega} \rho_{t}\cdot(\nabla_{\widehat{\tau_{t}}} h) h$
where
$$\rho_{t}=(\widehat{\partial_{\n_{t}}v_{t}})^2 \widehat{\alpha_{t}}\widehat{\beta_{t}}J_{\partial\Om}(t),\;\;\;\;\;\textrm{ and }\;\;\;\;\;\nabla_{\widehat{\tau_{t}}}h=\nabla h-(\nabla h\cdot \widehat{\n_t})\widehat{\n_{t}}$$
and we obtain (recall that $\nabla h\cdot\n=0$):
\begin{eqnarray}
\left|\mathcal{T}_{3}(t)-\mathcal{T}_{3}(0)\right|&\leq&\left|\int_{\partial\Om}\rho_{t}\cdot\left(\nabla_{\widehat{\tau_{t}}}h-\nabla_{\tau}h\right) h\right|+\left|\int_{\partial\Om}\left(\rho_{t}-\rho_{0})\cdot\nabla_{\tau}h\right) h\right|\\
&\leq&\|\nabla_{\widehat{\tau_{t}}}h-\nabla_{\tau}h\|_{\sH^{-1/2}}\|\rho_{t}h\|_{\sH^{1/2}}+\|(\rho_{t}-\rho_{0})h\|_{\sH^{1/2}}\|\nabla_{\tau}h\|_{\sH^{-1/2}}\\
&\leq&\|\nabla h\cdot(\widehat{\n_{t}}-\n)\|_{\sH^{-1/2}}\|\rho_{t}h\|_{\sH^{1/2}}+\|(\rho_{t}-\rho_{0})h\|_{\sH^{1/2}}\|h\|_{\sH^{1/2}}\label{eq:estT3}
\end{eqnarray}
In addition to \eqref{eq:produits}, we also have from \cite[Theorem 2 p 173]{RS} (see the comments on \eqref{eq:produits}):
\begin{equation}\label{eq:produits2}
\|uv\|_{\sH^{-1/2}(\partial\Om)}\leq C\|u\|_{\sW^{s,p}(\partial\Om)}\|v\|_{\sH^{-1/2}(\partial\Om)}
\end{equation}
if $\max\{1/2,(d-1)/p\}<s\leq 1$. Using again Lemmas \ref{estimations:geometriques1}, \ref{estimations:geometriques} and \ref{regularite:vp:fp}, we get
$$\|\rho_{t}-\rho_{0}\|_{\sW^{1-1/p,p}(\partial\Om)} \leq C \|h\|_{\sW^{2,p}(\partial\Om)}, \;\;\;\;\;\|\widehat{\n_{t}}-\n_{0}\|_{\sW^{1,p}(\partial\Om)} \leq C \|h\|_{\sW^{2,p}(\partial\Om)},$$
which combined with \eqref{eq:estT3}, \eqref{eq:produits} and \eqref{eq:produits2}, concludes the estimate of this term and hence the proof.
 \qed

\section{Applications}\label{sect:app}

\subsection{Retrieving some examples from the literature}{\label{ssect:literature}

In this paragraph, we apply our results  to retrieve previous results from the literature:\\

\noindent{\bf Isoperimetric inequalities:}
According to the previous sections, the perimeter satisfy conditions {\bf(C}$\!\!~_{\sH^{1}}${\bf)} and {\bf(IT}$\!\!~_{\sH^{1},\sW^{1,\infty}}${\bf)} at any smooth enough set, and in particular for the ball. Moreover, as shows Section \ref{ssect:balls},
we have
$$\ell_{1}[P](B_{1})=(d-1)\ell_{1}[\Vol](B_{1}), \;\;\;\;\;\textrm{ and }\;\;\;\;\;\;\ell_{2}[P-(d-1)\Vol](B_{1})(\varphi,\varphi)= \sum_{k=0}^{\infty}\sum_{l=1}^{d_{k}} (k-1)(k+d-1) \ \alpha_{k,l}(\varphi)^2.$$
Moreover, $\varphi\in T(\partial B_{1})$ if and only if $\alpha_{0,1}(\varphi)=\alpha_{1,i}(\varphi)=0$ for $i\in\{1,\ldots,d\}$. Therefore
 $B_{1}$ is a critical and strictly stable shape for $P$ under volume constraint, and up to translations: Theorem \ref{th:main:contrainte} applies, and we retrieve Fuglede's result from \cite{Fuglede} about nearly spherical domains.\\

Recently in \cite{Neumayer}, different improved versions (even with a better distance than the Fraenkel asymmetry for $d_{1}$ in \eqref{eq:stabJ}) of the quantitative isoperimetric inequality has been achieved for the anisotropic perimeter
$$P_{f}(\Om)=\int_{\partial\Om}f(\n_{\partial\Om})$$ where $f:\R^d\to\R_{+}$ is a convex positively 1-homogeneous function, whose minimizer under volume constraint is an homothetic version of the Wulff shape $K=\{f_{*}<1\}$ where $f_{*}$ is the gauge function of $f$. In particular in \cite[Theorem 1.3 and Section 4]{Neumayer} focused on the case where $K$ is assumed to be $\sC^2$ and uniformly convex, a strategy based on the second variation is used: the author proves in \cite[Lemma 4.1]{Neumayer} that $P_{f}$ satisfies conditions {\bf(C}$\!\!~_{\sH^{1}}${\bf)} and {\bf(IT}$\!\!~_{\sH^{1},\sW^{1,\infty}}${\bf)}. Therefore, this falls into the hypothesis of our Theorem \ref{th:main:contrainte}, so if we prove that $K$ satisfies \eqref{eq:stable}, then we retrieve \cite[Proposition 1.9]{Neumayer} (we assumed the shape to be $\sC^3$, but here this can be reduced to $\sC^2$, as noticed in Remark \ref{rk:reg}).
It is interesting to notice though that in order to show that $K$ satisfies \eqref{eq:stable}, the author in \cite{Neumayer} uses the quantitative Wulff isoperimetric inequality from \cite{FigMagPra} (obtained with optimal transport method). 
Therefore, up to our knowledge, there is no proof ``from scratch'' of the quantitative anisotropic isoperimetric inequality using a result similar to Theorem \ref{th:main:contrainte}.\\

\noindent{\bf The Ohta-Kawasaki model:}
In  \cite{AFM}, both steps of the strategy described page \pageref{page:strategy} are achieved in order to deal with the following functional, formulated in $\T^N=(\R/\Z)^N$ and which includes a non-local term:
\begin{equation}\label{eq:AFMfunction}
J(\Om)=P_{\T^N}(\Om)+\gamma G(\Om)
\;\;\;\;\textrm{ where } 
G(\Om)=\int_{\T^N}|\nabla w_{\Om}|^2\;\;\;\;\textrm{ and }\left\{\begin{array}{cll}-\Delta w_{\Om}&=& \mathbbm{1}_{\Om}-\mathbbm{1}_{\Om^c}-m \;\textrm{ in }\T^N\\[2mm] \displaystyle{\int_{\T^N}w_{\Om} }&=&0\end{array}\right.
\end{equation}
where $m=|\Om|-|\Om^c|\in(-1,1)$ is fixed. Again, there is an invariance with translation and a volume constraint.

In order to handle the first step of the strategy, the authors in \cite{AFM} prove a stability result for the $\sW^{2,p}$-topology, for $p$ large enough. 
The strategy is very similar to \cite{Dambrine}, but in the framework of $\sW^{2,p}$-spaces rather than $\sC^{2,\alpha}$-spaces. Note that this difference in the choice of spaces is not just a detail as it is relevant for the second step of the strategy when proving stability in an $\sL^1$-neighborhood as it is done in \cite[Section 4]{AFM}: their regularization procedure needs to allow discontinuity of the mean curvature, see equation (4.9) in the proof of \cite[Theorem 4.3]{AFM}. 
From the computations of \cite{Choksi-Sternberg}, we obtain
$$\ell_{1}[G](\Om)(\varphi)=4\int_{\partial\Om} w_{\Om}\varphi,$$
$$\ell_{2}[G](\Om)(\varphi,\varphi)=8\int_{\T^N} |\nabla z_{\varphi}|^2 dx+4\int_{\partial\Om} (\partial_{\n} w_{\Om}+H)\varphi^2,
\textrm{ where }-\Delta z_{\varphi}=\varphi\mathcal{H}^{N-1}\lfloor\partial\Om$$
therefore $G$ satisfies {\bf(C}$\!\!~_{\sH^{1/2}}${\bf)} and $J$ satisfies {\bf(C}$\!\!~_{\sH^{1}}${\bf)}, the dominant term being contained in the perimeter term. As we have seen that the perimeter satisfies {\bf(IT}$\!\!~_{\sH^{1},\sW^{1,\infty}}${\bf)} condition, it just remains to handle functional $G$, which is proven to satisfy {\bf(IC}$\!\!~_{\sH^{1},\sW^{2,p}}${\bf)} for $p>d$ in \cite{AFM}. Therefore Theorem \ref{th:main:contrainte} applies, and we retrieve \cite[Theorem 3.9]{AFM}.\\

\noindent{\bf The Faber-Krahn inequality:}
In \cite{BdPV} (see also \cite{FuscoZhang}) a quantitative version of the Faber-Krahn inequality is achieved, using again the two steps described page \pageref{page:strategy}: in order to achieve the first step, they use the Kohler-Jobin inequality (\cite{Kohler}), which implies that the Faber-Krahn deficit is controlled by the deficit of the Dirichlet energy $E$. We show here that it is possible to achieve this step without this ``trick'': we have seen that $\lambda_{1}$ satisfies
{\bf(C}$\!\!~_{\sH^{1/2}}${\bf)} and {\bf(IC}$\!\!~_{\sH^{1/2},\sW^{2,p}}${\bf)} for $p>d$, and for any $\varphi\in \C^\infty(\partial B_{1})$ such that $\int_{\partial B_{1}}\varphi=0$, we have
$$\ell_{1}[\lambda_{1}](B_{1})=-\beta_{d}^2\ell_{1}[\Vol](B_{1}), \;\;\;\;\;\textrm{ and }\;\;\;\;\;\;\ell_{2}[\lambda_{1}+\beta_{d}^2\Vol](B_{1})(\varphi,\varphi)= 2\beta_{d}^2\sum_{k=0}^{\infty}\sum_{l=1}^{d_{k}} Q_{k} \ \alpha_{k,l}(\varphi)^2.$$
where (using \cite[Section 9.1.27, p 361]{AbramowitzStegun})
$$Q_{k}=\bessel \cfrac{J'_{k+d/2-1}(\bessel)}{J_{k+d/2-1}(\bessel)}+\frac{d}{2}=k+d-1- \bessel \cfrac{J_{k+d/2}(\bessel)}{J_{k+d/2-1}(\bessel)}=\bessel \cfrac{J_{k+d/2-2}(\bessel)}{J_{k+d/2-1}(\bessel)}-k+1.$$
With the last formula, we easily notice that $Q_{1}=0$. The sign of $Q_{k}$ can be obtained using  \cite[section 6.5 page 133]{Polya-Szego} (done when $d=2$, but as noticed in \cite{Henry}, valid for any $d$): indeed, their computations imply
$$\bessel \cfrac{J'_{k+d/2-1}(\bessel)}{J_{k+d/2-1}(\bessel)}\geq k-d/2-1, \forall n\in\N^*,$$
which leads to $\forall k\geq 2, Q_{k}\geq k-1.$
Therefore Theorem \ref{th:main:contrainte} applies, and we retrieve a Faber-Krahn quantitative inequality in a $\sW^{2,p}$-neighborhood of the ball.
}

\subsection{Examples with competition}
\label{subsection:applications1}
In this section, $B$ is a ball, $X=\sW^{2,p}(\partial B)$ for $p>d$ and we denote  for $\eta>0$ (see \eqref{eq:distance} for a definition of $d_{X}$): 
\begin{equation}\label{eq:voisinage}
\mathcal{V}_{\eta}=\{\Om, d_{X}(\Om,B)\leq \eta\textrm{ and }|\Om|=|B|\}.
\end{equation}
Combining Theorem \ref{th:main:contrainte} to the computations from Section \ref{ssect:shapederivative}, we easily obtain the following result:

\begin{proposition}\label{prop:applications}
There exists $\gamma_{0}\in(0,\infty)$ such that for every $\gamma\in[-\gamma_{0},\infty)$, 
 there exists $\eta=\eta(\gamma)>0$ and $c=c(\gamma)>0$ such that for every $\Om\in \mathcal{V}_{\eta}$,
 $$(P+\gamma E)(\Om)\;\geq \;(P+\gamma E)(B)+cd_{\sH^{1}}(\Om,B)^2,\;\;\;(P+\gamma \lambda_{1})(\Om)\;\geq \;(P+\gamma \lambda_{1})(B)+cd_{\sH^{1}}(\Om,B)^2$$
$$
(E+\gamma \lambda_{1})(\Om)\;\geq \;(E+\gamma \lambda_{1})(B)+cd_{\sH^{1/2}}(\Om,B)^2,\;\;\;(\lambda_{1}+\gamma E)(\Om)\;\geq \;(\lambda_{1}+\gamma E)(B)+cd_{\sH^{1/2}}(\Om,B)^2.
$$
\end{proposition} 

\noindent{\bf Proof of Proposition \ref{prop:applications}}:
We show that we can apply Theorem \ref{th:main:contrainte} can be applied to $\Om^*=B$ and 
$$J\in\{P+\gamma E,P+\gamma \lambda_{1},E+\gamma\lambda_{1},\lambda_{1}+\gamma E)\}.$$
It is shown in Sections \ref{ssect:coercivity} and \ref{section:Chs} that $(P,E,\lambda_{1})$ satisfy {\bf(C}$\!\!~_{\sH^{s_{2}}}${\bf)} and {\bf(IT}$\!\!~_{\sH^{s_{2}},X}${\bf)} for suitable values of $s_{2}$, and with Lemmata \ref{diagonalisation:hessiennes} and \ref{continuite:derivee:seconde} we easily check that the ball is a critical and strictly stable domain for $J$ under volume constraint and up to translations, either if $\gamma\geq 0$ or if $\gamma<0$ is small enough.\qed
\begin{corollary}\label{cor:deficit}
With the same notations as in Proposition \ref{prop:applications}, we have, with $\eta_{0}=\eta(\gamma_{0})$:
\begin{eqnarray*}
\forall\Om\in \mathcal{V}_{\eta_{0}},
&\displaystyle{\frac{P(\Om)-P(B)}{E(\Om)-E(B)}\;\geq \gamma_{0},\;\;\;\;\;\;\frac{P(\Om)-P(B)}{\lambda_{1}(\Om)-\lambda_{1}(B)}\;\geq \gamma_{0}}\\[3mm]
&\displaystyle{\gamma_{0}\leq \frac{\lambda_{1}(\Om)-\lambda_{1}(B)}{E(\Om)-E(B)}\;\leq \gamma_{0}^{-1}}.
\end{eqnarray*}
\end{corollary}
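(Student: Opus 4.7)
The plan is to derive the four deficit-ratio inequalities directly from Proposition \ref{prop:applications} by evaluating at the boundary value $\gamma=-\gamma_0$. To be able to use a common constant and a common neighborhood for all four pairs, I first choose $\gamma_0>0$ small enough so that the conclusion of Proposition \ref{prop:applications} holds simultaneously for $(F_1,F_2)\in\{(P,E),(P,\lambda_1),(E,\lambda_1),(\lambda_1,E)\}$ at $\gamma=-\gamma_0$ (concretely, take the minimum of the four constants produced by the proposition). Then set $\eta_0$ to be the minimum of the four corresponding $\eta$-parameters; this is exactly the role of $\eta_0=\eta(\gamma_0)$ in the statement.

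Next, for any $\Om\in\V_{\eta_0}$ with $|\Om|=V_0$ and the same barycenter as $B$, I apply Proposition \ref{prop:applications} with $\gamma=-\gamma_0$ to each of the four pairs $(F_1,F_2)$. Discarding the nonnegative quadratic term $c\|h\|^2$ this yields the four linear inequalities
\begin{equation*}
F_1(\Om)-F_1(B)\;\geq\;\gamma_0\bigl(F_2(\Om)-F_2(B)\bigr),
\end{equation*}
i.e.\ respectively
$P(\Om)-P(B)\geq\gamma_0(E(\Om)-E(B))$,
$P(\Om)-P(B)\geq\gamma_0(\lambda_1(\Om)-\lambda_1(B))$,
$\lambda_1(\Om)-\lambda_1(B)\geq\gamma_0(E(\Om)-E(B))$,
and $E(\Om)-E(B)\geq\gamma_0(\lambda_1(\Om)-\lambda_1(B))$. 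The last of these, rearranged, gives the upper bound $\lambda_1(\Om)-\lambda_1(B)\leq\gamma_0^{-1}(E(\Om)-E(B))$ in the final line of the corollary.

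To turn the linear inequalities into the ratios stated in Corollary \ref{cor:deficit}, I need strict positivity of the denominators $E(\Om)-E(B)$ and $\lambda_1(\Om)-\lambda_1(B)$ whenever $\Om\neq B$ (if $\Om=B$, the ratios are vacuous). Two ways are available: either invoke the global Saint-Venant (for $E$) and Faber-Krahn (for $\lambda_1$) inequalities under fixed volume $V_0$, which give the required strict inequalities under $\Om\neq B$; or, staying entirely within the local framework of the paper, apply Proposition \ref{prop:applications} with $\gamma=0$ to the one-functional problems to obtain quantitative lower bounds $E(\Om)-E(B)\geq c\|h\|_{H^{1/2}}^2$ and $\lambda_1(\Om)-\lambda_1(B)\geq c\|h\|_{H^{1/2}}^2$ for $\Om\in\V_{\eta_0}$, $|\Om|=V_0$, which are strictly positive for $h\neq 0$. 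Dividing the linear inequalities by these strictly positive quantities yields the four ratio estimates.

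The only subtle point, and the one I would check carefully, is the bookkeeping of ``common'' constants: Proposition \ref{prop:applications} produces a pair $(\gamma,\eta(\gamma),c(\gamma))$ for each functional pair, so one must first freeze $\gamma=\gamma_0$ compatible with all four, then choose $\eta_0$ as the smallest of the corresponding radii so that every application is simultaneously valid on $\V_{\eta_0}$. There is no analytic obstacle beyond this: the corollary is essentially a polarization of the four inequalities of Proposition \ref{prop:applications} taken at the extreme admissible value $\gamma=-\gamma_0$, combined with the strict positivity of the two deficit denominators.
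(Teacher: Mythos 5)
Your derivation is correct and is exactly the intended one: the paper states the corollary without proof because it is the immediate rearrangement of the four inequalities of Proposition \ref{prop:applications} at $\gamma=-\gamma_{0}$, after dropping the quadratic term and dividing by the deficits $E(\Om)-E(B)$ and $\lambda_{1}(\Om)-\lambda_{1}(B)$, whose strict positivity for $\Om\neq B$ you justify appropriately (Saint-Venant/Faber--Krahn, or the $\gamma=0$ case of the proposition). Your bookkeeping remark about fixing a common $\gamma_{0}$ and taking $\eta_{0}$ as the smallest of the four radii is the right reading of the statement's ``$\eta_{0}=\eta(\gamma_{0})$''.
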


\begin{remark}
In \cite{Nitsch}, the second inequality in Corollary \ref{cor:deficit} is also investigated, but we provide here a uniform neighborhood so that this estimate applies.
 We also refer to \cite{Polya-Szego} for some result of this kind.
\end{remark}
\begin{remark}
To the contrary to the last two-sided inequality, it is not possible to bound the first two ratio from above. Indeed, for every $\gamma\in(0,\infty)$, there exists $\Om_{\gamma}=(\Id+\theta_{\gamma})(B)$ of class $\sC^{\infty}$ such that
$$|\Om_{\gamma}|=|B|, \;\;\|\theta_{\gamma}\|_{\sW^{2,p}(\R^d)}\leq \gamma^{-1}\textrm{ and }{ \displaystyle{\frac{P(\Om)-P(B)}{E(\Om)-E(B)}\;> \gamma}}.$$
This is due to the fact that the functionals $P$ and $(E,\lambda_{1})$ satisfy conditions {\bf(C}$\!\!~_{\sH^{s_{2}}}${\bf)} for different values of $s_{2}$.
\end{remark}

\noindent{\bf Explicit constants:} We want to go further and compute explicit numbers $\gamma$ such that the inequalities of Proposition \ref{prop:applications} holds. To simplify the expressions, we restrict ourselves to the case of the unit ball. In the first two cases, we find the optimal constant, see Remark \ref{rk:optimal} about the other cases.

\begin{proposition}\label{prop:constante:optimale}
Using notations of Proposition \ref{prop:applications} and $\beta_{d}$ defined in \eqref{definition:gammad},
\begin{description}
\item[(i)] if $\gamma>-(d+1)d^2$, then $B_{1}$ is a local strict minimizer of $P+\gamma E$. 
Moreover, when $\gamma=-(d+1)d^2$, the second derivative of the Lagrangian cancels in some directions and when $\gamma<-(d+1)d^2$, the ball is a saddle shape for $P+\gamma E$.
\item[(ii)] if $\gamma>-  \ \cfrac{d(d+1)}{2\beta_{d}^2(\bessel^2-d)} $, \Bk  then $B_{1}$ is a local strict minimizer of $P+\gamma \lambda_{1}$. 
Moreover, when $\gamma=-\cfrac{d(d+1)}{2\beta_{d}^2(\bessel^2-d)}$, the second derivative of the Lagrangian cancels in some directions and when $\gamma<-\cfrac{d(d+1)}{2\beta_{d}^2(\bessel^2-d)}$, the ball is a saddle shape for $P+\gamma \lambda_{1}$.
\item[(iii)] if $\gamma>-\cfrac{1}{d^2  (d+1)\Bk \beta_{d}^2} $, then $B_{1}$ is a local strict minimizer of $E+\gamma \lambda_{1}$.
\item[(iv)] if $\gamma>- \beta_{d}^2 d^2 $, then $B_{1}$ is a local strict minimizer of $ \lambda_{1}+\gamma E$. 
\end{description}
\end{proposition}

\begin{remark}\label{rk:optimal}
In the cases (iii) and (iv), the constants we compute are not optimal, in particular we do not claim the ball is a saddle point once we go beyond the computed value. Nevertheless computing the optimal value only requires to compute $\sup_{k\geq 2}\tau'_{k}$ and $\sup_{k\geq 2}\tau''_{k}$ (see the notations in the proof below) as it is done in the cases (i) and (ii). As it is seen in the second case (ii) handled by Nitsch in \cite{Nitsch}, these computations can be rather technical. Let us notice also that we simplify the expression of the optimal constant given by Nitsch.
\end{remark}
\noindent {\bf Proof of Proposition  \ref{prop:constante:optimale}:}\\
\noindent {\bf (i)}
We first compute the Lagrange multiplier $\mu(t)$ associated to the volume constraint at $B_{1}$: it is defined as $\ell_{1}[P+tE)+\mu(t) \Vol]=0$ that is from the expression of the shape gradients of $\Vol$, $P$ and $E$:
$$\mu(t)=\cfrac{1}{2d^2} \ t \ -(d-1).$$
Let us now turn our attention to hessian of the function $P+tE+\mu(t)\Vol$ on the balls $B_{1}$. 
As a consequence of Lemma \ref{diagonalisation:hessiennes}, the shape hessian of the lagrangian $P+tE+\mu(t) \Vol$ at balls is
$$\ell_{2}[P+tE+\mu(t)\Vol](B_{1}).(\varphi,\varphi)=\sum_{k=0}^{\infty} c_{k}(t)\sum_{l=1}^{d_{k}}  \alpha_{k,l}(\varphi)^2$$ where we have set
$$c_{k}(t)= k^2+\left[ (d-2)+\cfrac{1}{d^2} \ t\right] \ k - \left[(d-1) + \cfrac{1}{d^2} \ t \right] =(k-1) \left[ k+(d-1)+\cfrac{1}{d^2} \ t \right].$$
Therefore, the hessian of the Lagrangian $\ell_{2}[P+tE+\mu(t) \Vol](B_{1})$ is coercive in $T(\partial B_{1})$ if and only if $t$ solves the inequalities 
$$ k+(d-1)+\cfrac{1}{d^2} \ t>0$$ for all $k\geq 2$. Of course, it suffices to solves that inequality in the special case $k=2$ that provides $t>-(d+1)d^2$.\\

\noindent{{\bf (ii)}}
With the same notions as in (i) with $P+t\lambda_{1}+\mu(t)\Vol$, we obtain : 
$$\mu(t)=\beta_{d}^2 \ t \ -(d-1), \;\;\;\;\;c_{k}(t)= k^2+(d-2+t\beta_{d}^2) k -(d-1)+t\beta_{d}^2 \left[d-1- \bessel \cfrac{J_{k+d/2}(\bessel)}{J_{k-1+d/2}(\bessel)}\right].$$
\Bk We introduce  the sequences $a_{k}=J_{k-1+d/2}(\bessel)$ and $b_{k}=a_{k+1}/a_{k}$   so that: 
$$c_{k}(t)= k^2+(d-2) k -(d-1)+2t\beta_{d}^2 \left[k+d-1- \bessel b_{k}\right].$$
 For a given integer $k\geq 2$, $c_{k}(t)>0$ holds when $t>\tau_{k}$ defined as
$$\tau_{k}=-\cfrac{(k-1)(k+d-1)}{2\beta_{d}^2(k+d-1-\bessel b_{k})}.$$
In order to obtain to find the optimal value of $t$ so that these inequalities are satisfied for every $k\geq 2$, we need to compute the supremum of $\{\tau_{k}, k\geq 2\}$.
It is proven by Nitsch in \cite[proof of Lemma 2.3, p 332]{Nitsch} that for all $k\geq 2, \tau_{k}\leq\tau_{2}$, so the ball is strictly stable if and only if $t>\tau_{2}$. We describe here how one can obtain a more explicit version of $\tau_{2}$: from the recurrence formula for Bessel function (\cite[section 9.1.27, p 361]{AbramowitzStegun})
$$ (2\nu/z)J_{\nu }(z) =J_{\nu -1}(z) +J_{\nu+1}(z)$$  
 applied to $\nu= k-1+d/2$ and $z=\bessel$, the sequences $a_{k}$ and $b_{k}$ satisfy the recurrence property
$$a_{k+1} = \cfrac{2(k-1)+d} {\bessel} \ a_{k}-a_{k-1} \text{ and } b_{k}= \cfrac{2(k-1)+d} {\bessel} \ -\ \cfrac{1}{b_{k-1}}$$
with the initial terms $a_{0}=0$ and $a_{1}=J_{d/2}(\bessel)$ so that $b_{1}=a_{2}/a_{1}=d/ \bessel $ (which explains  $c_{1}(t)= 0$ for any $t$, as known for the invariance by translations of all the involved functions).   Therefore, we have:
$$b_{2}=\cfrac{2+d}{\bessel}-\cfrac{\bessel}{d}=\frac{d(d+2)-\bessel^2}{d\bessel}
$$
and as a consequence, we obtain that 
$$\tau_{2}= -\ \cfrac{d(d+1)}{2\beta_{d}^2(\bessel^2-d)}.
$$

\Bk

\noindent{\bf (iii)} With the same notions as in (i) with $E+t\lambda_{1}+\mu(t)\Vol$, we obtain : 
$$\mu(t)=(1/d^2)+t \beta_{d}^2, \;\;\;\;\;c_{k}(t)= \left(\frac{1}{{d}^2}+t\beta_{d}^2\right) k -	\cfrac{1}{d^2}+t\beta_{d}^2 \left[d-1- \bessel b_{k}\right].$$
Again $c_{1}(t)=0$ and $c_{k}(t)> 0$ if and only if 
$$t> \tau'_{k} = -\cfrac{k-1}{d^2 \beta_{d}^2 (k+d-1-\bessel b_{k})}.$$
Using that $b_{1}\geq b_{k}>0$, we obtain
$$\tau'_{k} <-\cfrac{1}{d^2\beta_{d}^2} \ \cfrac{k-1}{k+d-1}   =  -\cfrac{1}{d^2\beta_{d}^2} \ \left(1- \cfrac{d}{k+d-1}\right)\leq  -\cfrac{1}{d^2(d+1)\beta_{d}^2}.$$
Therefore, if $t>-\cfrac{1}{d^2(d+1)\beta_{d}^2}$ then for any $k\geq 2$, $t> \tau'_{k}$, which leads to the result.\\

\noindent{\bf (iv)} With the same notions as in (i) with $\lambda_{1}+tE+\mu(t)\Vol$, we obtain : 
$$\mu(t)=(t/d^2)+ \beta_{d}^2, \;\;\;\;c_{k}(t)= \left(\frac{t}{{d}^2}+\beta_{d}^2\right) k -	\cfrac{t}{d^2}+\beta_{d}^2 \left[d-1- \bessel b_{k}\right].$$
We check $c_{1}(t)=0$, and $c_{k}(t)> 0$ if and only if 
$$t> \tau''_{k}=- \beta_{d}^2 d^2 \ \left(1 + \cfrac{d-\bessel b_{k}}{k-1}\right) .$$
  Using that $b_{1}\geq b_{k}>0$, we obtain $\tau''_{k}\leq - \beta_{d}^2 d^2,$ and therefore, if $t>- \beta_{d}^2 d^2$ then for any $k\geq 2$, $t>\tau''_{k}$, which leads to the result.
\qed

\section{Counterexample for non smooth perturbations}\label{ssect:non-stab}

We show in this section that even if the ball is a local minimum in a smooth neighborhood, it may not be a local minimum in a non-smooth neighborhood.

Consider 
 $\Om^*=B$ a ball of volume $V_{0}$. We have seen in Proposition \ref{prop:applications} that there is  a real number $\gamma_{0}\in(0,\infty)$ such that for every $\gamma\in(-\gamma_{0},\infty)$, $B$ is a stable local minimum for $P+\gamma E$. 

For $\gamma\geq 0$ this is not surprising. 
However, \emph{for $\gamma<0$, the fact that the ball is a local minimizer is no longer trivial}: there is a competition between the minimization of the perimeter and maximization the Dirichlet energy. If $\gamma$ small enough, our result shows  that $B$ is still a local minimizer in a $\sW^{2,p}$-neighborhood. Nevertheless, 
in that case $B$ is no longer a local minimizer in a $\sL^1$-neighborhood :

\begin{proposition}
Let $B$ be a ball. For every $\gamma<0$ and any $\eps>0$ one can find $\Om_{\eps}$ such that 
$$|\Om_{\eps}\Delta B|<\eps, \;\;\;|\Om_{\eps}|=|B|, \;\;\;\textrm{ and }\;\;\;(P+\gamma E)(\Om_{\eps})<(P+\gamma E)(B).$$
\end{proposition}

To prove this result, we use the idea of topological derivative:  it is well known that if one consider a small hole of size $\eps$ in the interior of a fixed shape, the energy will change at order $\eps^{d-2}$ if $d\geq 3$ and ${1}/{\log(\eps)}$ if $d=2$, which is strictly bigger than the change of perimeter which is of order $\eps^{d-1}$, and therefore will strictly decrease the energy $P+\gamma E$ when $\gamma<0$. For the sake of completeness, we provide a proof of this fact for a centered hole.\\[2mm]
\begin{proof}
We can assume without loss of generality (using translation and scaling properties) that $B=B_{1}$ is the centered ball of radius 1,   and we define $\Om_{\eps}=B_{1}\setminus B(0,\eps)$. Using that $\Delta u=\partial_{rr}u+\frac{d-1}{r}\partial_{r}u$ when $u$ is radial, the state function is:
$$u_{\Om_{\eps}}(r)=\frac{(\eps^{d-2}-\eps^d)r^{2-d}+\eps^d-1}{2d(\eps^{d-2}-1)}-\frac{r^2}{2d},\;\textrm{ if }d\geq 3$$
$$u_{\Om_{\eps}}(r)=\frac{1-\eps^2}{-4\log(\eps)}\log(r)+\frac{1-r^2}{4},\;\textrm{ if }d=2$$
and therefore 
\begin{eqnarray*}
\textrm{ if }d\geq 3, \;\;E(\Om_{\eps})&=&-\frac{1}{2}\int_{\Om_{\eps}}u_{\Om_{\eps}}=\left[\frac{d(1-\eps^2)^2\eps^{d-2}-2(1-\eps^d)^2}{8d^2(1-\eps^{d-2})}+\frac{1-\eps^{d+2}}{4d(d+2)}\right]P(B_{1})
\\
&=&\left[-\frac{1}{2d^2(d+2)}+\frac{d-2}{8d^2}\eps^{d-2}+o(\eps^{d-2})\right]P(B_{1}),
\end{eqnarray*}
\begin{eqnarray*}
\textrm{ if }d=2, \;\;E(\Om_{\eps})&=&-\frac{1}{2}\int_{\Om_{\eps}}u_{\Om_{\eps}}=\left[\frac{(1-\eps^2)}{-8\log(\eps)}(1-\eps^2(1-2\log(\eps)))-\frac{1}{16}(1-\eps^2+\frac{\eps^4}{2})\right]P(B_{1})
\\
&=&\left[-\frac{1}{16}-\frac{1}{8\log(\eps)}+o\left(\frac{1}{\log(\eps)}\right)\right]P(B_{1}).
\end{eqnarray*}
We now define $\widetilde{\Om_{\eps}}=\mu_{\eps}\Om_{\eps}$ where $\mu_{\eps}=(1-\eps^d)^{-1/d}$ so that
$$|\widetilde{\Om_{\eps}}|=|B_{1}|, \;\;\;\;P(\widetilde{\Om_{\eps}})-P(B_{1})=\left[\mu_{\eps}^{d-1}(1+\eps^{d-1})-1\right]P(B_{1})\sim_{\eps\to 0}\eps^{d-1}P(B_{1})$$
$$E(\widetilde{\Om_{\eps}})-E(B_{1})\sim_{\eps\to 0} \frac{(d-2)P(B_{1})}{8d^2}\eps^{d-2}>0, \;\;\textrm{ if }d\geq 3, \;\;\;E(\widetilde{\Om_{\eps}})-E(B_{1})\sim_{\eps\to 0} \frac{P(B_{1})}{-8\log(\eps)}>0, \;\;\textrm{ if }d=2$$
so that in both cases, for any negative $\gamma$,
$(P+\gamma E)(\Om_{\eps})-(P+\gamma E)(B_{1})<0 \textrm{ for small }\eps. $
\end{proof}

\paragraph{Acknowledgements.} This work was partially supported by the project ANR-12-BS01-0007 OPTIFORM financed by the French Agence Nationale de la Recherche (ANR).

\bibliographystyle{essai2}
\bibliography{references-5}
\bigskip

\noindent------------------------------------------------------------------

\noindent Marc Dambrine

\smallskip

\noindent CNRS / UNIV Pau \& Pays Adour / E2S UPPA, Laboratoire de Math\'ematiques et de leurs Applications de Pau - F\'ed\'eration IPRA, UMR 5142 64000, Pau, France

\smallskip

\noindent E-mail: \texttt{marc.dambrine@univ-pau.fr}
\newline\bigskip
\texttt{http://web.univ-pau.fr/\symbol{126}mdambrin/Marc{\_}Dambrine/Home.html}

\noindent Jimmy Lamboley

\smallskip

\noindent Sorbonne universit\'e, Institut Math\'ematiques de Jussieu-Paris Rive Gauche, CNRS, Univ Paris Diderot. \\Campus Pierre et Marie Curie, 4 place Jussieu, 75252 Paris Cedex 5, France

\smallskip

\noindent E-mail: \texttt{jimmy.lamboley@imj-prg.fr}

\noindent\texttt{https://webusers.imj-prg.fr/\symbol{126}jimmy.lamboley/}

\end{document}